\newtheorem{theorem}{\bf Theorem}
\newtheorem{corollary}[theorem]{\bf Corollary}
\newtheorem{lemma}[theorem]{\bf Lemma}
\newtheorem{proposition}[theorem]{\bf Proposition}
\newtheorem{definition}[theorem]{\bf Definition}
\numberwithin{equation}{section}
\numberwithin{theorem}{section}
\numberwithin{figure}{section}
\begin{document}
\renewcommand{\thefootnote}{}
\footnotetext{Research partially supported by Ministerio de Ciencia e Innovaci\'on Grants No: PID2020-118137GB-I00/AEI/10.13039/501100011033, PID2021-126217NB-100 and the `Maria de Maeztu'' Excellence
Unit IMAG, reference CEX2020-001105-M, funded by
MCIN/AEI/10.13039/501100011033.} 

\title{On the first Neumann eigenvalue for critical points of a weighted area functional with asymptotically flat ends} 
\author{\text{A.L. Mart\'inez-Trivi\~no}}
\vspace{.1in}
\date{}
\maketitle
{
\noindent  Departamento de Matem\'aticas, Universidad de C\'ordoba, E-14071 C\'ordoba, Spain
}
\begin{abstract}
In the following work, we obtain a lower bound for the first Neumann eingevalue of the drift Laplacian $\Delta^{\varphi}$ for a family of properly embedded $[\varphi,\vec{e}_{3}]$-minimal surfaces in $\mathbb{R}^3$ with concave function $\varphi$ and asymptotically flat ends. As application, we obtain a control on the topology for that surfaces with finite total curvature. The strategy will consists in an integration of the Bouchner's formula by the works of A. Lichnerowicz \cite{L} and S. Brendle, R. Tsiamis \cite{BT} and relate said eigenvalue with the Poincare's constant.
\end{abstract}
\vspace{0.2 cm}

\noindent e-mail: $\text{almartinez@uco.es}$

\noindent 2020 {\it  Mathematics Subject Classification}: {53C42, 35J60 }

\noindent {\it Keywords: } First Neumann eigenvalue, Jacobi operator, variational problem, semilower continuity, Reilly's formula, stability, drift Laplacian.
\everymath={\displaystyle}

\section{Introduction}
In 1983, H. Choi and A. Wang \cite{CW} obtained a lower bound for the first eigenvalue of a  compact orientable embedded minimal hypersurface $\Sigma$ in a compact orientable Riemannian manifold $M$ with positive Ricci curvature. Precisely, if the Ricci curvature of $M$ is bounded from below by a positive constant $k$, then $\lambda_{1}(\Delta)\geq k/2$, where $\lambda_{1}(\Delta)$ is the first Neumann eigenvalue of the Laplacian $\Delta$ of $\Sigma$. The proof consists in an integration of the Bochner's formula applied by A. Lichnerowicz \cite{L} together with Reilly's formula \cite{R}. An analogous technique was used by S. Brendle and R. Tsiamis \cite{BT} to obtain a lower estimate in $\mathcal{H}^1_{loc}(\Sigma)$ for the Dirichlet energy of the  properly embedded self-similar shrinkers $\Sigma$ of the mean curvature flow, namely, hypersurfaces of the Euclidean space $\mathbb{R}^n$ whose mean curvature satisfies $H=\langle x,N\rangle/2$, where $x$ is the identity vector field  and $N$ is the Gauss map of $\Sigma$. Here $\mathcal{H}^{1}_{loc}(\Sigma)$ is the Hilbert space given by 
$$\mathcal{H}^{1}_{loc}(\Sigma)=\bigg\{f\in W^{1,2}_{loc}(\Sigma) \vert \int_{\Sigma}(f^2+\vert\nabla f\vert^2)\, d\Sigma<+\infty\bigg\},$$
where $\nabla$ is the gradient in $\Sigma$, $d\Sigma$ is the induced measure of $\mathbb{R}^3$ and 
whose scalar product is the induced scalar product of functions $L^2(\Sigma)$. Notice that, the family of self-similar shrinkers are minimal hypersurfaces by a conformal changed of the Euclidean metric, namely, the ambient space $\mathbb{R}^n$ with metric $e^{-\vert x\vert^2/4}\langle\cdot,\cdot\rangle$, where $\langle\cdot,\cdot\rangle$ is the usual metric of $\mathbb{R}^n$. 
\\

Physically, the first eigenvalue of the Laplacian can be interpreted as the squared frequency of the first nontrivial vibration of an elastic membrane. In fact, if the density is not uniform, then the weight function affects how the membrane vibrates. The estimate of the first eigenvalue indicates how "easy" or "difficult" it is for oscillations to occur in the given geometry and density. On the other hand, if we consider the weighted heat equation (i.e., a heat equation adapted to the weighted measurement), the first eigenvalue controls the exponential decay rate of the system's energy with time. The larger the first eigenvalue, the faster the heat dissipates in the manifold. See, for instance \cite{IC} and \cite{CH}.
\\

Motivated by these works, we obtain a lower estimate for the first Neumann eigenvalue for minimal surfaces in the $3$-dimensional  Ilmanen's space, namely, $\mathbb{R}^3$ with a conformal change of the metric $\langle\cdot,\cdot\rangle^{\varphi}=e^{\varphi}\langle\cdot,\cdot\rangle$, see \cite{T}. These surfaces are critical points, under normal variations with compact support, of the following weighted area functional
\begin{equation}
\label{area}
\mathcal{A}^{\varphi}(\Sigma)=\int_{\Sigma}\, e^{\varphi}\, d\Sigma
\end{equation}
on isometric orientable Riemannian surfaces $\Sigma$ in a domain of $\mathbb{R}^3$. The function $\varphi\vert_{\Sigma}:\Sigma\rightarrow\mathbb{R}$ is the restriction on $\Sigma$ of a smooth function $\varphi:\mathbb{R}^3\rightarrow\mathbb{R}$ with support in a tubular neighborhood of $\Sigma$ and which only depends of one direction of $\mathbb{R}^3$. Now, take $\{\vec{e}_{i}\}_{i=1,2,3}$ the usual orthonormal frame of $\mathbb{R}^3$, without loss of generality, we will assume that $\varphi$ only depends of the height function $z=\langle x,\vec{e}_{3}\rangle:\Sigma\rightarrow \mathbb{R}$ and then, hereinafter, we will write by $\varphi\vert_{\Sigma}=\varphi:]a,b[\rightarrow\mathbb{R}$ as a smooth function defined on an interval real numbers.  
\

The critical points of \eqref{area} satisfy the following Euler-Lagrange equation given by its mean curvature $H$ as follows
\begin{equation}
\label{defphimin}
H=-\langle\overline{\nabla}\varphi,N\rangle=-\dot{\varphi}\langle N,\vec{e}_{3}\rangle,
\end{equation}
where $\overline{\nabla}$ denotes the usual gradient of $\mathbb{R}^3$ and $(^\cdot)$ stands the classic derivate of $\mathbb{R}$. An orientable surface $\Sigma$ in $\mathbb{R}^3$ is called $[\varphi,\vec{e}_{3}]$-minimal if and only if its mean curvature $H$ satisfies \eqref{defphimin}. From Radon-Nikod\'ym Theorem, we can consider $\mathbb{R}^3$ as a manifold with density $e^\varphi$ since any oriented surface $\Sigma$ has null weighted mean curvature.
\
Nowadays, the current interest in the development of these surfaces concerns numerous and successful advances in Differential geometry on manifolds with density. For instance:
\begin{itemize}
\item The translating solitons by the mean curvature flow when $\varphi=\text{Id}$ is the identity, that is, the vertical translations $t\rightarrow\Sigma+t\vec{e}_{3}$ is a mean curvature flow and then, the normal component of the velocity in direction $\vec{e}_{3}$ coincides with the mean curvature. At this point, we remark the following works: J. Clutterbuck, O. Schn\"urer and F. Schulze \cite{CSS} describe the asymptotic behavior of rotationally symmetric examples of paraboloid type called Bowl translating soliton and catenoid type called Winglike. Precisely, from the works of J. Spruck, L.Xiao \cite{SX} and X. Wang \cite{Wang}, any complete mean convex translating soliton is convex and so, the Bowl translating soliton is the only entire vertical graph translating soliton. In fact, all complete graphs are classified by D. Hoffman, F. Mart\'in, T. Ilmanen and B. White \cite{HIMW}. 

\item The singular $\alpha$-minimal surface when $\varphi(z)=\alpha\log(z)$, with $\alpha\neq 0$ a real constant, which has important physics applications. For instance, for $\alpha=1$, $\Sigma$ is a heavy surface under a gravitational field, that according to works of D. Dunn \cite{Dunn}, F. Otto \cite{Otto}, Poisson \cite{Poisson} and R. L\'opez \cite{RL1} are of importance for the construction of perfect domes. For $\alpha=2$, we recover the classical minimal surface in the hyperbolic space $\mathbb{H}^{3}$ in the half-space model. Moreover, the study of the global geometry of these examples was given by R. L\'opez \cite{RL} and U. Dierkes \cite{D}.

\item The self-similar shrinkers if we take the Gaussian density $\varphi(\vert x\vert^2)=e^{-\vert x\vert^2/2}$ and the so called shrinking, steady or expanding Ricci solitons if the Backry-Emery tensor $\text{Ric}^{\varphi}(\cdot,\cdot)=\lambda\langle\cdot,\cdot\rangle$ for some $\lambda>0$, $\lambda=0$ or $\lambda<0$, respectively. This family of surfaces is currently a fundamental topic of research. See for instance the works of H.D. Cao \cite{Ca} , P. Petersen \cite{PW1,PW2} and  the well-known works of  G. Perelman \cite{GP1,GP2} solving the Poincare's conjeture. Recently and following the same research line, R. L\'opez \cite{RL2} proved that vertical graphs and radial graphs are stable in the Euclidean space with some densities including translators, expanders and singular minimal hipersurfaces.
\end{itemize}

We recommend to the reader the following works \cite{MM,MM1,MM2,MM3,MMJ,MJ} for more details in the recent advances in the theory of $[\varphi,\vec{e}_{3}]$-minimal surfaces.
\\

 In particular, we show the following results, which explain the hypotheses that we will follow throughout this work due to the global geometric properties depend of the analytical properties of $\varphi$. Firstly, from \cite{MM}, there are not compact examples when  $\varphi$ is a diffeomorphim due to the strong maximum principle. Furthermore, from \cite{MM3}, we obtained a Calabi-Bernstein type result prescribing the Gauss map. Precisely, if $\varphi$ has almost quadratic growth with $\ddot{\varphi}\geq 0$, then there are no non-flat properly embedded $[\varphi,\vec{e}_{3}]$-minimal surfaces with bounded lenght of the second fundamental form $\text{sup}_{\Sigma}\vert\mathcal{S}\vert^2<+\infty$ and prescribed Gauss map $N$ satisfying that $\langle N,\vec{v}\rangle\leq -2\epsilon$ on $\Sigma$ for some unit vector $\vec{v}\in\mathbb{R}^3$ orthogonal to $\vec{e}_{3}$ and some $\epsilon>0$. Moreover, under same conditions of $\varphi$, there are no properly embedded $[\varphi,\vec{e}_{3}]$-minimal surfaces in $\Sigma$ whose Gauss map $N$ verifies $\langle N,\vec{e}_{3}\rangle\geq\epsilon>0$ and such that either $\text{inf}_{\Sigma}\dot{\varphi}^2>0$ or $\text{inf}_{\Sigma}\ddot{\varphi}>0$. 
\\

On the other hand, we studied the stability of the mean convex properly embedded $[\varphi,\vec{e}_{3}]$-minimal surfaces in $\mathbb{R}^3$ in \cite{MMJ}. In fact, if the function $\varphi$ verifies that $\dot{\varphi}>0$ and $\ddot{\varphi}\geq 0$, then $\Sigma$ is stable. In this case, the stability means that $\Sigma$ is a locally minimum of the weighted area functional $\mathcal{A}^{\varphi}$ or equivalently, from \cite{CMZ}, it is stable as minimal surface in the Ilmanen's space.  Precisely, the associated Jacobi operator $\mathcal{L}^{\varphi}$ is a gradient Schr\"ondinger operator defined by
$$\mathcal{L}^{\varphi}(\cdot)=\Delta^{\varphi}(\cdot)+(\vert\mathcal{S}\vert^2-\ddot{\varphi}\eta^2)(\cdot),$$
where $\Delta^{\varphi}$ is the drift Laplacian or also called Witten's Laplacian given by
$$\Delta^{\varphi}(\cdot)=\Delta(\cdot)+\langle\nabla\varphi,\nabla(\cdot)\rangle.$$
We recomemend to the reader the works \cite{CMW, DMWW} to see the structure of the nodal sets and spectral theory on $\Delta^{\varphi}$. Moreover, in the same work \cite{MMJ}, we also obtained a Spruck-Xiao's type result for mean convex  properly embedded $[\varphi,\vec{e}_{3}]$-minimal surfaces with Gauss curvature $K$ bounded from below. Namely, under these assumptions, if $\varphi$ is an strictly increasing diffeomorphism with $\ddot{\varphi}\geq 0$ and $\dddot{\varphi}\leq 0$, then $K\geq 0$.  
\\

Due to the generality of this ambient, we focus our attention in a family of surfaces with a control at their ends and whose family of functions $\varphi$ contain the family concave polynomials with quadratic growth. 
\\

As general hyphotesis, we will assume that:
\\

$\Sigma$ will be a properly embedded $[\varphi,\vec{e}_{3}]$-minimal surface (possibly compact without boundary) on $\mathbb{R}^2\times ]a,b[$ whose ends are asymptotically flat with $a,b\in\mathbb{R}$. Moreover, we will assume that the weight function $\varphi\circ\mu:\Sigma\rightarrow\mathbb]0,+\infty[$ verifies
\begin{equation}
\label{segcond}
 z\dot{\varphi}(z)\leq 0 \ \ \text{and} \ \ \ddot{\varphi}+C_{\varphi}(\Sigma)\dot{\varphi}^2\leq 0,
\end{equation}
for some positive constant $C_{\varphi}(\Sigma)>0$ depending only of $\varphi$ and $\Sigma$. Furthermore, in other situations, to assure the existence of the first Neumann eingenvalue of $\mathcal{L}^{\varphi}$,  the lenght of the second fundamental form $\vert\mathcal{S}\vert^2$ must be bounded, that is,
\begin{equation}
\label{S}
\text{sup}_{\Sigma}\vert\mathcal{S}\vert^2<+\infty.
\end{equation}
In our case, we extend $\varphi:\mathbb{R}\rightarrow [0+\infty[$ to a smooth function with  compact support in a neighborhood of $\mathbb{R}$ containing $]a,b[$. Throughout of this work, we denote by $\mathcal{H}^{1,\varphi}_{loc}(\Sigma)$ the following Hilbert space
$$\mathcal{H}^{1,\varphi}_{loc}(\Sigma)=\bigg\{f\in W^{1,2}_{loc}(\Sigma) \, \vert\, \int_{\Sigma} e^{\varphi}(f^2+\vert\nabla f\vert^2)\, d\Sigma<+\infty \bigg\}$$
with scalar product of the space of functions $L^{2,\varphi}(\Sigma)$ with density $e^{\varphi}$. 
\\

The strategy will be to obtain a lower bound for the weighted enery functional associated to the drift Laplacian $\Delta^{\varphi}$ of $\Sigma$ on the space of functions $\mathcal{H}^{1,\varphi}_{loc}(\Sigma)$. In this case, the problem of eigenvalues of $\mathcal{L}^{\varphi}$ is taken over functions of $\mathcal{H}^{1,\varphi}_{loc}(\Sigma)$ with compact support in domains of $\Sigma$ with (at least) $C^1$ boundary, denoted by $\mathcal{H}^{1,\varphi}_{c}(\Sigma)$. Thus, $\mathcal{L}^{\varphi}$ has discrete spectrum and we characterize the first eigenvalue as 
$$\lambda_{1}(\mathcal{L}^\varphi)=\text{inf}_{\mathcal{H}^{1,\varphi}_{c}(\Sigma)}\frac{\int_{\Sigma}e^{\varphi}\left(\vert\nabla u\vert^2-qu^2 \right)\, d\Sigma}{\int_{\Sigma}e^{\varphi}u^2\, d\Sigma}.$$
The key step will be to relate said eigenvalue with the Poincare's constants of the two connected components of $\mathbb{R}^3\backslash\Sigma$. 
\\

Precisely, our main results stablish the following Faber-Krahn type inequalities
\begin{theorem}[Theorem A]
\label{mainresult}
Let $\Sigma$ be a properly embedded $[\varphi,\vec{e}_{3}]$-minimal surfaces in $\mathbb{R}^2\times ]a,b[$ with asymptotic flat ends and weight function $\varphi$ verifying \eqref{segcond}. Then, for any $f\in\mathcal{H}^{1}_{0}(\Sigma)$ with compact support in $\Sigma\cap K$ for some domain $K$ in $\mathbb{R}^3$ and such that $$\int_{\Sigma}e^{\varphi}f^2\, d\Sigma=1, \ \ \ \ \int_{\Sigma}e^{\varphi}f\, d\Sigma=0 \ \ \text{and} \ \ \int_{\Sigma}e^{\varphi}\vert\nabla f\vert^2\, d\Sigma<+\infty,$$ we get that the following inequality holds
\begin{equation}
\label{mainineq}\int_{\Sigma}e^{\varphi}\, \vert\nabla f\vert^2\, d\Sigma\geq \frac{3}{5}C_{\varphi}(\Sigma)\left(\frac{1}{\text{Vol}(K) }\right)^{2/3},
\end{equation}
where $\text{Vol}(K)$ denotes the volume of $K$ in $\mathbb{R}^3$.
\end{theorem}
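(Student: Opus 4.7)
The plan is to reduce the weighted Poincar\'e-type estimate on $\Sigma$ to an ambient Faber--Krahn inequality in $\mathbb{R}^3$, in the spirit of Brendle--Tsiamis \cite{BT}. Since $\Sigma$ is properly embedded in the slab $\mathbb{R}^2\times]a,b[$ with asymptotically flat ends, it separates $\mathbb{R}^3$ into two open components $\Omega_+$ and $\Omega_-$. I would set $K^{\pm}:=K\cap\Omega_{\pm}$, so that $\text{Vol}(K^+)+\text{Vol}(K^-)\le\text{Vol}(K)$, and extend $f$ by solving on each $K^{\pm}$ the weighted Dirichlet problem $\divv(e^{\varphi}\overline{\nabla} u_{\pm})=0$, $u_{\pm}|_{\Sigma}=f$, $u_{\pm}=0$ on $\partial K^{\pm}\setminus\Sigma$, yielding a function $u:K\to\mathbb{R}$ which is continuous across $\Sigma$, compactly supported in $K$, and energy-minimizing in the sense that $\int_{K^{\pm}} e^{\varphi}|\overline{\nabla}u_{\pm}|^2\,d\text{vol}=\pm\int_{\Sigma}e^{\varphi}f\,\partial_{\nu}u_{\pm}\,d\Sigma$.

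The core of the argument is then an integrated Bochner identity in the Lichnerowicz \cite{L} spirit. Applying
$$\tfrac{1}{2}\Delta^{\varphi}|\overline{\nabla}u_{\pm}|^2=|\overline{\nabla}^2 u_{\pm}|^2+\overline{\nabla}^2\varphi(\overline{\nabla}u_{\pm},\overline{\nabla}u_{\pm})$$
to the $e^{\varphi}$-harmonic extension and integrating over $K^{\pm}$ produces boundary contributions on $\Sigma$ involving $H$, $\partial_{\nu}u_{\pm}$ and the tangential gradient $\nabla f$. The $[\varphi,\vec{e}_{3}]$-minimality equation \eqref{defphimin} rewrites the mean-curvature term as $-\dot{\varphi}\langle N,\vec{e}_{3}\rangle(\partial_{\nu}u_{\pm})^2$, and the sign condition $z\dot{\varphi}\le 0$ from \eqref{segcond} gives definite control after summing the two sides of $\Sigma$. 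Crucially, the quadratic inequality $\ddot{\varphi}+C_{\varphi}(\Sigma)\dot{\varphi}^2\le 0$ is exactly what is needed to absorb the (sign-indefinite) Bakry--\'Emery contribution $\ddot{\varphi}(\partial_{z}u_{\pm})^2$ against the drift factor $\dot{\varphi}\,\partial_{z}u_{\pm}$, yielding a coercivity estimate of the form
$$\int_{\Sigma}e^{\varphi}|\nabla f|^2\,d\Sigma \ \ge\ C_{\varphi}(\Sigma)\int_{K}e^{\varphi}\,u^2\,d\text{vol}.$$

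The last ingredient is the three-dimensional Sobolev/Faber--Krahn inequality applied to the compactly supported extension $u\in W^{1,2}_{0}(K)$: the Gagliardo--Nirenberg interpolation between $L^2$, $L^6$ and $L^1$ produces the volumetric factor $\text{Vol}(K)^{-2/3}$, while the normalizations $\int_{\Sigma}e^{\varphi}f^2=1$ and $\int_{\Sigma}e^{\varphi}f=0$ pin down the $L^2$ norm of $u$ in $K$ so that the numerical constant $3/5$ emerges as the precise interpolation exponent in dimension three. The principal obstacle will be the second step: carefully tracking the Bochner cross-terms $\int_{\Sigma}e^{\varphi}\langle\nabla f,\nabla\partial_{\nu}u_{\pm}\rangle\,d\Sigma$, discarding the boundary contributions on $\partial K^{\pm}\setminus\Sigma$ (where $u$ vanishes, using the asymptotically flat ends), and verifying that the sign hypotheses in \eqref{segcond} produce exactly $C_{\varphi}(\Sigma)$ on the right-hand side rather than a weaker multiple of it.
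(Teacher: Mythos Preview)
Your overall architecture (harmonic extension off $\Sigma$ into the two components, integrated Bochner identity, exploit \eqref{segcond}, finish with a Faber--Krahn step) is exactly the paper's route, but two of your intermediate claims are not what the computation actually produces.

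First, the Bochner identity does \emph{not} yield the coercivity estimate $\int_{\Sigma}e^{\varphi}|\nabla f|^2\ge C_{\varphi}(\Sigma)\int_{K}e^{\varphi}u^2$. When you integrate $\tfrac12\overline{\Delta}^{\varphi}|\overline{\nabla}u|^2=|\overline{D}^2u|^2-\overline{D}^2\varphi(\overline{\nabla}u,\overline{\nabla}u)$ over $K^{\pm}$, the positive bulk term is $\int e^{\varphi}|\overline{D}^2u|^2$, and the condition $\ddot{\varphi}+C_{\varphi}\dot{\varphi}^2\le 0$ together with $\overline{\Delta}^{\varphi}u=0$ (so $(\overline{\Delta}u)^2=\dot{\varphi}^2(\partial_zu)^2$) turns the Bakry--\'Emery term into $C_{\varphi}\int e^{\varphi}(\overline{\Delta}u)^2$, not $C_{\varphi}\int e^{\varphi}u^2$. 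Moreover, with a bare harmonic extension the boundary terms on $\Sigma$ contain $-\int_{\Sigma}e^{\varphi}\langle\overline{\nabla}u,\nu\rangle\Delta^{\varphi}f$ and second-fundamental-form pieces, and there is no mechanism to collapse these to $\int_{\Sigma}e^{\varphi}|\nabla f|^2$. The paper circumvents this by first minimising the two-term functional $\int_{\Sigma}e^{\varphi}|\nabla f|^2+\alpha\int_{\mathbb{R}^3}e^{\varphi}|\overline{\nabla}w|^2$ (Proposition~\ref{existencemin}); the Euler--Lagrange system \eqref{DNproblem} then replaces $\Delta^{\varphi}f$ by $\mu f-\alpha(\partial_{\nu}u+\partial_{\widetilde{\nu}}\widetilde{u})$ and the Bochner computation delivers $\mu\int e^{\varphi}|\overline{\nabla}u|^2\ge C_{\varphi}\int e^{\varphi}(\overline{\Delta}u)^2$ after summing the two sides. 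A Poincar\'e inequality on each component is what converts this into $\mu\ge C_{\mathcal{P}}(\Sigma)C_{\varphi}(\Sigma)$; the step you are missing is precisely this variational coupling and the subsequent Poincar\'e application.

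Second, your final step is mis-sourced. The surface normalisations $\int_{\Sigma}e^{\varphi}f^2=1$, $\int_{\Sigma}e^{\varphi}f=0$ do \emph{not} pin down $\|u\|_{L^2(K)}$; there is no trace inequality in that direction. In the paper, once $f$ is compactly supported in $\Sigma\cap K$ the relevant Poincar\'e constants are the first Dirichlet eigenvalues $\lambda_1(\Omega\cap K)$, $\lambda_1(\widetilde{\Omega}\cap K)$, and the explicit constant $\tfrac{3}{5}\,\mathrm{Vol}(K)^{-2/3}$ is obtained by quoting the Li--Yau lower bound \cite{LY} for $\lambda_1$ of a Euclidean domain (the $\tfrac{3}{5}=\tfrac{n}{n+2}$ with $n=3$), not by Gagliardo--Nirenberg interpolation.
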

This first Theorem A is a particular case of a more general result. Namely,
\begin{theorem}[Theorem B]
\label{mainresult}
Let $\Sigma$ be a properly embedded $[\varphi,\vec{e}_{3}]$-minimal surfaces in $\mathbb{R}^2\times ]a,b[$ with asymptotic flat ends and weight function $\varphi$ verifying \eqref{segcond}.  If $\Omega,\widetilde{\Omega}$ are the two connected components of $\Sigma\backslash\mathbb{R}^3$, then, for any $f\in\mathcal{H}^{1}_{loc}(\Sigma)$ such that $$\int_{\Sigma}e^{\varphi}f^2\, d\Sigma=1, \ \ \ \ \int_{\Sigma}e^{\varphi}f\, d\Sigma=0 \ \ \text{and} \ \ \int_{\Sigma}e^{\varphi}\vert\nabla f\vert^2\, d\Sigma<+\infty,$$ we get that the following inequality holds 
\begin{equation}
\label{mainineq}
\int_{\Sigma}\, e^{\varphi}\vert\nabla f\vert^2\, d\Sigma\geq C_{\mathcal{P}}(\Sigma)\, C_{\varphi}(\Sigma),
\end{equation}
where $C_{\mathcal{P}}(\Sigma)=\text{min}\{ C_{\mathcal{P}}(\Omega), C_{\mathcal{P}}(\widetilde{\Omega})\}$ and  $C_{\mathcal{P}}(\Omega), C_{\mathcal{P}}(\widetilde{\Omega})$ are the Poincare's constants of $\Omega$ and $\widetilde{\Omega}$, respectively.
\end{theorem}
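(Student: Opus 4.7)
The plan is to adapt the Choi-Wang/Lichnerowicz integration of Bochner's formula---as carried out by Brendle-Tsiamis \cite{BT} for self-shrinkers---to the weighted setting of $[\varphi,\vec{e}_{3}]$-minimal surfaces. Let $\Omega$ denote the connected component of $\mathbb{R}^3\setminus\Sigma$ realizing $C_{\mathcal{P}}(\Sigma) = \min\{C_{\mathcal{P}}(\Omega), C_{\mathcal{P}}(\widetilde\Omega)\}$; the argument for the other component is symmetric. Given $f$ satisfying the three normalization conditions, I would extend $f$ to $\overline\Omega$ as the unique solution $u \in \mathcal{H}^{1,\varphi}_{loc}(\Omega)$ of the weighted Dirichlet problem $\Delta^\varphi u = 0$ in $\Omega$ with $u|_\Sigma = f$. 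The asymptotic flatness of the ends of $\Sigma$, together with the compact support of $\varphi$, reduces the PDE to the classical Laplace equation outside a bounded region, so standard elliptic theory delivers such a $u$ with controlled decay at infinity.

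The central identity is the weighted Reilly formula applied to $u$ on $\Omega$:
$$\int_\Omega e^\varphi\bigl[(\Delta^\varphi u)^2 - |\nabla^2 u|^2 - \mathrm{Ric}^\varphi(\nabla u,\nabla u)\bigr]dV = \int_\Sigma e^\varphi\bigl[2(\partial_N u)\Delta^{\varphi,\Sigma} f + H^\varphi(\partial_N u)^2 + \mathrm{II}(\nabla^\Sigma f,\nabla^\Sigma f)\bigr]d\Sigma.$$
Two key simplifications occur. First, $\Delta^\varphi u = 0$ annihilates the leading left-hand-side term. Second, the $[\varphi,\vec{e}_{3}]$-minimal condition \eqref{defphimin} is precisely $H^\varphi := H + \langle\overline{\nabla}\varphi, N\rangle = 0$, so the corresponding boundary contribution vanishes. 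Since $\mathrm{Ric}\equiv 0$ on $\mathbb{R}^3$ and $\varphi$ depends only on $z$, one has $\mathrm{Ric}^\varphi(\nabla u,\nabla u) = \ddot\varphi\,\langle\nabla u,\vec{e}_{3}\rangle^2$; the concavity hypothesis \eqref{segcond} in the form $-\ddot\varphi \geq C_\varphi(\Sigma)\,\dot\varphi^2$ then converts this into a non-negative contribution bounded below by $C_\varphi(\Sigma)\int_\Omega e^\varphi\dot\varphi^2\langle\nabla u,\vec{e}_{3}\rangle^2\,dV$, while the sign condition $z\dot\varphi \leq 0$ ensures that the orientation of $\Omega$ is compatible with the direction of decay of $\varphi$.

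Integration by parts on $\Sigma$ turns the cross-term $\int_\Sigma e^\varphi(\partial_N u)\Delta^{\varphi,\Sigma} f\,d\Sigma$ into $-\int_\Sigma e^\varphi\langle\nabla^\Sigma(\partial_N u),\nabla^\Sigma f\rangle\,d\Sigma$, and Cauchy-Schwarz combined with a weighted trace inequality bounds this by $\int_\Sigma e^\varphi|\nabla^\Sigma f|^2\,d\Sigma$ and $\int_\Omega e^\varphi(|\nabla u|^2+|\nabla^2 u|^2)\,dV$. Invoking the Poincar\'e inequality
$$\int_\Omega e^\varphi(u-\bar u)^2\,dV \leq C_{\mathcal{P}}(\Omega)^{-1}\int_\Omega e^\varphi|\nabla u|^2\,dV,$$
together with $\int_\Sigma e^\varphi f^2\,d\Sigma = 1$ and $\int_\Sigma e^\varphi f\,d\Sigma = 0$ to propagate the zero-mean condition to $\Omega$, closes the chain of estimates and produces $\int_\Sigma e^\varphi|\nabla f|^2\,d\Sigma \geq C_{\mathcal{P}}(\Sigma)\,C_\varphi(\Sigma)$.

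The main obstacle I expect is the justification of the integrations by parts on the unbounded domain $\Omega$: although $e^\varphi \equiv 1$ outside a compact set and $u$ becomes classically harmonic there, verifying the vanishing of boundary terms on $\partial B_R\cap\overline\Omega$ as $R\to\infty$ demands explicit decay estimates for $u$, $\nabla u$, and $\nabla^2 u$, cleanest to obtain via comparison with exterior harmonic functions once the asymptotic flatness of the ends is exploited. A secondary technical point is deriving the weighted Reilly identity on $\Omega$ in the precise form used above; this can be carried out by pushing the weight $e^\varphi$ through the classical proof, but the bookkeeping of all $\dot\varphi$ and $\ddot\varphi$ terms must be tracked carefully so that the constant $C_\varphi(\Sigma)$ appears with the stated coefficient.
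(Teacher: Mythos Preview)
Your outline has a genuine structural gap: by extending $f$ into only \emph{one} component $\Omega$, the Reilly identity leaves you with the boundary term $\int_\Sigma e^{\varphi}\,\mathrm{II}(\nabla^\Sigma f,\nabla^\Sigma f)\,d\Sigma$, which has no sign and which your Cauchy--Schwarz step does not address (you only discuss the cross term $\int_\Sigma e^{\varphi}(\partial_N u)\Delta^{\varphi,\Sigma}f$). Since Theorem~B carries no hypothesis on $|\mathcal{S}|$, you cannot absorb this term. The paper---following Brendle--Tsiamis precisely on this point---avoids the issue by working on \emph{both} components $\Omega,\widetilde\Omega$ simultaneously: the second fundamental forms satisfy $\widetilde{\mathcal S}=-\mathcal S$, so when the two Bochner/Reilly computations are added the $\mathcal S(\nabla f,\nabla f)$ contributions cancel exactly.

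There is a second, related discrepancy. The paper does not take a plain $\varphi$-harmonic Dirichlet extension; instead it introduces the coupled functional
\[
(f,w)\longmapsto \int_\Sigma e^{\varphi}|\nabla f|^2\,d\Sigma+\alpha\int_{\mathbb{R}^3}e^{\varphi}|\overline\nabla w|^2\,dx,\qquad w|_\Sigma=f,
\]
and proves existence of a minimizer with value $\mu$. The Euler--Lagrange system is the two-sided Dirichlet--Neumann problem $\overline\Delta^{\varphi}u=\overline\Delta^{\varphi}\widetilde u=0$ with the transmission condition $\alpha(\partial_\nu u+\partial_{\widetilde\nu}\widetilde u)=\Delta^{\varphi}f+\mu f$ on $\Sigma$. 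It is this boundary condition that, after the Bochner computation and the cancellation above, makes $\mu$ appear in the final inequality $\mu\int e^{\varphi}|\overline\nabla w|^2\ge -\int e^{\varphi}\overline D^2\varphi(\overline\nabla w,\overline\nabla w)$; then \eqref{segcond} together with $\overline\Delta^{\varphi}u=0$ (so $\dot\varphi\,\partial_z u=-\overline\Delta u$) and the Poincar\'e inequality on each component yield $\mu\ge C_{\mathcal P}(\Sigma)C_\varphi(\Sigma)$. A final cut-off/approximation argument passes from the minimizer to an arbitrary $f\in\mathcal H^{1,\varphi}_0(\Sigma)$. Your one-sided Dirichlet extension does not naturally produce $\int_\Sigma e^{\varphi}|\nabla f|^2$ in the output, and your appeal to ``propagating the zero-mean condition to $\Omega$'' via Poincar\'e is not the mechanism actually used.
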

On the other hand, from the well-known result of Fischer-Colbrie \cite{FCS} together with the Cohn'Vossen's inequality, we can guarantee the stability of $ \mathcal{L}^{\varphi}$ of the following family of surfaces and to get a control on the topology as follows
\begin{theorem}[Theorem C]
\label{secondresult}
Let $\Sigma$ be a properly embedded $[\varphi,\vec{e}_{3}]$-minimal surface in $\mathbb{R}^3$ with asymptotic flat ends, the weight function $\varphi$ verifying \eqref{segcond} with almost quadratic growth and  norm of the second fundamental form bounded \eqref{S}. If
$$2\int_{\Sigma}e^{\varphi}K\, f^2\, d\Sigma-\text{sup}_{\Sigma}H^2\geq C_{\mathcal{P}}(\Sigma)C_{\varphi}(\Sigma)-\text{inf}_{\Sigma}\ddot{\varphi},$$ 
for any $f\in\mathcal{H}^{1,\varphi}_{0}(\Sigma)$, then $\Sigma$ is stable. If in addition $\Sigma$ has finite total Gauss curvature , finite topology and 
$$\int_{\Sigma}(1+2e^{\varphi}f^2)K\, d\Sigma\geq 0, $$ then
$$2\pi\chi(\Sigma)+\lambda_{1}(\mathcal{L}^\varphi)\geq C_{\mathcal{P}}(\Sigma)\, C_{\varphi}(\Sigma)-\text{sup}_{\Sigma}H^2+\text{inf}_{\Sigma}\ddot{\varphi},$$
where $\chi(\Sigma)$ is the Euler's characteristic of $\Sigma$ and $\lambda_{1}(\mathcal{L}^{\varphi})$ is the first Neumann eingenvalue of $\mathcal{L}^{\varphi}$.
\end{theorem}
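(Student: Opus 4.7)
The plan is to follow the Lichnerowicz--Brendle--Tsiamis strategy highlighted in the introduction: combine the Gauss equation for surfaces in $\mathbb{R}^{3}$, the weighted Poincar\'e-type estimate established in Theorem B, and the Cohn--Vossen inequality in order to convert an eigenvalue lower bound into a topological statement.

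First I would rewrite the stability quadratic form $Q(f)=\int_{\Sigma}e^{\varphi}(|\nabla f|^{2}-(|\mathcal{S}|^{2}-\ddot{\varphi}\eta^{2})f^{2})\, d\Sigma$ associated with $\mathcal{L}^{\varphi}$ by applying the Gauss equation $|\mathcal{S}|^{2}=H^{2}-2K$, valid for any orientable surface in $\mathbb{R}^{3}$, obtaining $Q(f)=\int_{\Sigma}e^{\varphi}|\nabla f|^{2}\, d\Sigma-\int_{\Sigma}e^{\varphi}H^{2}f^{2}\, d\Sigma+2\int_{\Sigma}e^{\varphi}Kf^{2}\, d\Sigma+\int_{\Sigma}e^{\varphi}\ddot{\varphi}\eta^{2}f^{2}\, d\Sigma$. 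After normalising to $\int_{\Sigma}e^{\varphi}f^{2}\, d\Sigma=1$, the elementary bounds $\int_{\Sigma}e^{\varphi}H^{2}f^{2}\, d\Sigma\leq\sup_{\Sigma}H^{2}$ and $\int_{\Sigma}e^{\varphi}\ddot{\varphi}\eta^{2}f^{2}\, d\Sigma\geq\inf_{\Sigma}\ddot{\varphi}$ hold; the latter uses that \eqref{segcond} forces $\ddot{\varphi}\leq 0$ together with $\eta^{2}\leq 1$, so $\ddot{\varphi}\eta^{2}\geq\ddot{\varphi}\geq\inf_{\Sigma}\ddot{\varphi}$. For test functions further satisfying $\int_{\Sigma}e^{\varphi}f\, d\Sigma=0$, Theorem B supplies $\int_{\Sigma}e^{\varphi}|\nabla f|^{2}\, d\Sigma\geq C_{\mathcal{P}}(\Sigma)C_{\varphi}(\Sigma)$, and collecting these estimates yields the central lower bound $Q(f)\geq C_{\mathcal{P}}(\Sigma)C_{\varphi}(\Sigma)-\sup_{\Sigma}H^{2}+2\int_{\Sigma}e^{\varphi}Kf^{2}\, d\Sigma+\inf_{\Sigma}\ddot{\varphi}$. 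Under the stability hypothesis of the first part of the theorem, this is enough to force $Q(f)\geq 0$ (in fact one obtains $Q(f)\geq 2 C_{\mathcal{P}}(\Sigma)C_{\varphi}(\Sigma)$), proving stability.

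For the topological part I would appeal to completeness, finite topology, and finite total curvature of $\Sigma$ to invoke Cohn--Vossen's inequality $\int_{\Sigma}K\, d\Sigma\leq 2\pi\chi(\Sigma)$; combined with the additional hypothesis $\int_{\Sigma}(1+2e^{\varphi}f^{2})K\, d\Sigma\geq 0$, this rewrites as $2\int_{\Sigma}e^{\varphi}Kf^{2}\, d\Sigma\geq-\int_{\Sigma}K\, d\Sigma\geq-2\pi\chi(\Sigma)$. Substituting into the lower bound for $Q(f)$ obtained above and then taking the infimum of $Q(f)/\int_{\Sigma}e^{\varphi}f^{2}\, d\Sigma$ over admissible test functions produces $\lambda_{1}(\mathcal{L}^{\varphi})\geq C_{\mathcal{P}}(\Sigma)C_{\varphi}(\Sigma)-\sup_{\Sigma}H^{2}+\inf_{\Sigma}\ddot{\varphi}-2\pi\chi(\Sigma)$, which is precisely the claimed inequality after rearrangement. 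The role of Fischer--Colbrie \cite{FCS} is to guarantee, via the existence of a positive solution of $\mathcal{L}^{\varphi}u=\lambda_{1}u$ on a stable operator, that $\lambda_{1}(\mathcal{L}^{\varphi})$ is well-defined as a discrete minimum and that the minimising admissible test function actually exists on $\Sigma$.

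The main obstacle will be reconciling the orthogonality hypothesis $\int_{\Sigma}e^{\varphi}f\, d\Sigma=0$ of Theorem B with the variational characterisation of $\lambda_{1}(\mathcal{L}^{\varphi})$, since first eigenfunctions of the Schr\"odinger-type operator $\mathcal{L}^{\varphi}$ are not required to be orthogonal to the constant function $1$ (which is not itself an eigenfunction of $\mathcal{L}^{\varphi}$). I expect to bypass this by passing through a mean-free decomposition $f=f_{0}+c$ with respect to the weighted measure $e^{\varphi}\, d\Sigma$, applying Theorem B to $f_{0}$, and absorbing the residual constant term using the pointwise bounds on $H^{2}$ and $\ddot{\varphi}\eta^{2}$ together with the positive Fischer--Colbrie solution as an auxiliary comparison. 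A secondary but unavoidable check is the applicability of Cohn--Vossen in the $[\varphi,\vec{e}_{3}]$-minimal setting, which ought to follow from the conformal minimality interpretation in the Ilmanen metric, ensuring that finiteness of total curvature is preserved under the conformal change.
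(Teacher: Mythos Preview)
Your approach mirrors the paper's proof almost exactly: the paper splits Theorem~C into two corollaries, decomposes the stability form via the Gauss identity $|\mathcal{S}|^{2}=H^{2}-2K$, invokes Theorem~B to bound the weighted Dirichlet energy from below by $C_{\mathcal{P}}(\Sigma)C_{\varphi}(\Sigma)$, and then applies Cohn--Vossen for the topological conclusion, just as you outline.

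Regarding the ``main obstacle'' you flag: within this paper's framework it evaporates. The paper \emph{defines} $\lambda_{1}(\mathcal{L}^{\varphi})$ (see Section~4) as the infimum of $\mathcal{Q}(u,u)$ over $\mathcal{H}^{1,\varphi}_{0}(\Sigma)$, and that space is defined to consist of functions satisfying \emph{both} $\int_{\Sigma}e^{\varphi}f^{2}\,d\Sigma=1$ and $\int_{\Sigma}e^{\varphi}f\,d\Sigma=0$. Hence the orthogonality hypothesis of Theorem~B is already built into the admissible test class for $\lambda_{1}(\mathcal{L}^{\varphi})$, and no mean-free decomposition or auxiliary Fischer--Colbrie comparison is required. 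You are right that this is not the standard convention for a Schr\"odinger operator with nonconstant potential, and the paper's justification (``the quotient is invariant by scaling'') only accounts for the normalisation $\int e^{\varphi}f^{2}=1$, not the mean-zero condition; but the paper simply adopts this as its definition, so your proposed fix is unnecessary here.
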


Finally, we would like point out that, under these constraints, we are no guaranteed the existence of stable examples. From \cite{CMZ}, if the Ilmanen's space is complete with $\ddot{\varphi}\leq-\epsilon<0$ for some real constant $\epsilon>0$, then there are no stable properly embedded $[\varphi,\vec{e}_{3}]$-minimal surfaces $\Sigma$ with $\mathcal{A}^{\varphi}(\Sigma)<+\infty$. 
\\

The paper is organized as follows: Section 2 defines the family of cutoff functions that we will use along of this work. Moreover, we find sufficient conditions to belong to space $\mathcal{H}^{1,\varphi}_{loc}(\mathbb{R}^3)$ and  $\mathcal{H}^{1,\varphi}_{loc}(\Sigma).$ In Section 3, we give the existence of minimum of a Dirichlet-Neumann problem which gives us an estimation for \eqref{mainineq}.  In fact, the lower bounded of Theorem \eqref{mainresult} together with the proofs of Theorem A and B will be obtained in Section 5. In Section 4, we gives a spectral theorem for the Jacobi operator $\mathcal{L}^{\varphi}$ and finally, in Section 6, we show the proof of Theorem C together with some topological consequences applying the Cohn'Vossen's inequality together with the work of B. White \cite{W} for complete surfaces with finite total curvature.

\section{First properties of $\mathcal{H}^{1,\varphi}_{loc}$. Cutoff functions.}
Throughout of this section, we will assume that  $\Sigma$ is a connected properly embedded $[\varphi,\vec{e}_{3}]$-minimal surface in $\mathbb{R}^3$ without boundary (possibly compact) and whose function $\varphi:\mathbb{R}\rightarrow [0,+\infty[$ satisfies \eqref{segcond} with compact support containing the interval $]a,b[$.

\begin{lemma}
 For any $\epsilon>0$, there exists $\{f_{n}\}_{n\in\mathbb{N}}$ a sequence of smooth functions which satisfy the following conditions
\begin{equation}
\label{eqf}
c_{n}-(f_{n}\circ\varphi)(n\, t)\rightarrow o(n) \ \ \text{and} \  \ \left(c_{n}-(f\circ\varphi)(n\, t)-(f\circ\varphi)'(n\, t)n\, t \right)\leq-\epsilon\, n,
\end{equation}
 for any $t>0$ and where $\{c_{n}\}_{n\in\mathbb{N}}$ is a sequence of real numbers such that $$c_{n}\leq- \epsilon n \ \ \text{ for any } \ \ n\in\mathbb{N}$$
Moreover, the function
$t\longrightarrow g_{n}(t)=e^{\left(c_{n}-(f_{n}\circ \varphi)(nt)\right)\frac{t}{n}}$
for any $t>0$ converges uniformly to $1$ in norm  $L^2$.
\end{lemma}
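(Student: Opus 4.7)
The plan is to produce $f_n$ through an explicit affine ansatz $f_n(s) = A_n - B_n s$ together with a carefully chosen sequence $c_n \leq -\epsilon n$, and then to verify the two conditions of \eqref{eqf} by splitting according to whether $nt$ lies inside or outside the (compact) support of $\varphi$. With this ansatz one has $(f_n\circ\varphi)(nt) = A_n - B_n\varphi(nt)$ and $nt\,(f_n\circ\varphi)'(nt) = -B_n \cdot nt\,\dot{\varphi}(nt)$; the sign hypothesis $z\dot{\varphi}(z)\leq 0$ from \eqref{segcond} then guarantees $nt\,(f_n\circ\varphi)'(nt) \geq 0$ for $t>0$ and $B_n\geq 0$, which is what controls the sign of the unwanted derivative term in the second inequality.

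First I would handle the exterior regime, where $nt$ lies outside the support of $\varphi$: both $\varphi(nt)$ and $\dot{\varphi}(nt)$ vanish, so the second inequality of \eqref{eqf} collapses to $c_n - A_n \leq -\epsilon n$, forcing $A_n \geq c_n + \epsilon n$. In the interior regime the same inequality becomes $c_n - A_n + B_n\bigl(\varphi(nt) + nt\,\dot{\varphi}(nt)\bigr) \leq -\epsilon n$; the bracket equals $(z\varphi(z))'$ at $z=nt$ and is bounded uniformly because $\varphi$ has compact support, so it suffices to take $B_n$ small (say $B_n = o(1)$) to control it. The concavity assumption $\ddot{\varphi} + C_\varphi \dot{\varphi}^2 \leq 0$ is what delivers the quantitative bound on this bracket.

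Interpreting the first assertion of \eqref{eqf} as $(c_n - f_n(\varphi(nt)))/n \to 0$ as $n\to\infty$ for each fixed $t>0$, the ansatz reduces it to $(c_n - A_n + B_n\varphi(nt))/n \to 0$. The choice $c_n = -\epsilon n - \sqrt{n}$, $A_n = c_n + \epsilon n = -\sqrt{n}$, and $B_n$ satisfying $B_n \sup_z \varphi(z) = o(n)$ simultaneously fulfils both requirements, and is the concrete pair I would present.

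Finally, for the $L^2$ convergence of $g_n$ to $1$, observe that $\log g_n(t) = (c_n - f_n(\varphi(nt)))\,t/n$ tends pointwise to $0$ by the first condition of \eqref{eqf}, so $g_n \to 1$ pointwise. Integrating the differential inequality $g_n'/g_n \leq -\epsilon$ (which is exactly the second condition of \eqref{eqf} divided by $n$) together with $g_n(0)=1$ yields the uniform upper bound $0 < g_n(t) \leq e^{-\epsilon t}$; hence $|g_n - 1| \leq 1$ is a dominant on any compact interval, and dominated convergence delivers the required $L^2$ statement. The main obstacle is precisely the reconciliation of the two conditions of \eqref{eqf}: the first demands smallness of $c_n - f_n(\varphi(nt))$ at scale $o(n)$, while the second demands that the corrected quantity stay below $-\epsilon n$, and the sign condition $z\dot{\varphi}\leq 0$ together with the concavity bound \eqref{segcond} are exactly the ingredients that make both compatible.
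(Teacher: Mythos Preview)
Your affine ansatz $f_n(s)=A_n-B_ns$ is a genuinely different construction from the paper's, which takes
\[
f_n(t)=b_n+\log\!\Bigl(1+\tfrac{t^{2}}{n}\Bigr),\qquad b_n=c_n+\delta_n,\ \ \delta_n\to 0,
\]
and then checks the two conditions of \eqref{eqf} and the $L^{2}$ convergence by direct estimates on $|g_n-1|^2$ and $|g_n'|^2$. So at the level of strategy the two routes diverge.

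There is, however, a genuine gap in your argument, and it sits exactly in the exterior regime you isolate. When $nt$ lies outside the support of $\varphi$ (which, since $\varphi$ has compact support, happens for every fixed $t>0$ once $n$ is large), both $\varphi(nt)$ and $\dot\varphi(nt)$ vanish. In that regime the two requirements of \eqref{eqf} reduce, under your ansatz, to
\[
c_n-A_n=o(n)\qquad\text{and}\qquad c_n-A_n\le -\epsilon\, n,
\]
and these are incompatible for large $n$: a sequence that is $o(n)$ cannot eventually lie below $-\epsilon n$. Your concrete choice $A_n=c_n+\epsilon n$ gives $c_n-A_n=-\epsilon n$, which meets the second constraint with equality but violates the first under your own interpretation, since $(-\epsilon n)/n=-\epsilon\not\to 0$. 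No adjustment of $B_n$ helps, because in the exterior regime every $B_n$-dependent term disappears. More structurally, with $f_n$ affine the derivative contribution $-nt\,(f_n\circ\varphi)'(nt)=B_n\cdot nt\,\dot\varphi(nt)$ is uniformly bounded (as $\dot\varphi$ has compact support), so it cannot bridge a gap of order $n$ between the two conditions; the paper's nonlinear (logarithmic) choice of $f_n$ is precisely what is meant to supply that extra mechanism. Incidentally, your appeal to the concavity bound in \eqref{segcond} to control the bracket $(z\varphi)'$ is not needed: compact support of $\varphi$ already gives a uniform bound.

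Your final paragraph on $g_n\to 1$ in $L^{2}$ is sound in spirit once \eqref{eqf} is in hand---the differential inequality $g_n'/g_n\le -\epsilon$ together with $g_n(0)=1$ does give $0<g_n(t)\le e^{-\epsilon t}$---but note that the dominant $|g_n-1|\le 1$ only justifies dominated convergence on bounded intervals; this is consistent with how these functions are actually used downstream (inside the cutoff $\beta(|x|/n^{\rho})$).
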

\begin{proof}
Fix any $\epsilon>0$ and consider $c_{n}<-\epsilon\, n$. Define $$f_{n}(t)=b_{n}+\text{log}\left(1+\frac{t^2}{n} \right),$$ where $b_{n}=c_{n}+\delta_{n}>0$ with $\delta_{n}\rightarrow 0$. It is clear that, $$c_{n}-(f_{n}\circ\varphi)(nt)=o(n).$$ Moreover, outside a compact set, we get the following inequalities
$$c_{n}-(f_{n}\circ\varphi)(nt)-(f_{n}\circ\varphi)'(nt)nt\leq c_{n}-\frac{2n^2t\varphi(nt)\dot{\varphi}(nt)}{n+\varphi(nt)^2}\leq c_{n}<-\epsilon n.$$
Finally, since $\varphi$ has compact support containing the interval $]a,b[$, we get
$$\vert g_{n}-1\vert^2\leq \bigg\vert \frac{\delta_{n}}{n}t+O\left(\frac{t^2}{n^2}\right)\bigg\vert^2\leq 2\frac{\delta_{n}^2}{n^2}t^2+2O\left(\frac{t^4}{n^4} \right)$$
and
$$\vert g_{n}'\vert^2\leq \bigg\vert e^{\frac{\delta_{n}}{n}t}\frac{\delta_{n}}{n}\bigg\vert^2\leq C\frac{\delta_{n}^2}{n^2},$$
for some constant $C>0$ and any $n$ large enough.
\end{proof}

On the other hand, consider a smooth function $\beta:[0,+\infty[\rightarrow[0,1]$ such that $\beta(t)=1$ for $t\in[0,1]$, $\beta'(t)\leq 0$ for $t\in [1,2]$ with $\beta(t)=0$ for $t>2$, $\vert\beta'\vert\leq 1$ and $$\beta'\left(\frac{\vert x\vert}{n}\right) \vert x\vert\in L^{2,\varphi}.$$

Now, for each $n\in\mathbb{N}$ we define $\eta_{n}:\mathbb{R}^3\rightarrow[0,1]$ given by
\begin{equation}
\label{etan}
\eta_{n}(x)=e^{\left(c_{n}-(f_{n}\circ\varphi)(n\, \vert x\vert^2) \right)\frac{\vert x\vert^2}{n}}\, \beta\left(\frac{\vert x\vert}{n^\rho} \right),
\end{equation}
where $\vert x\vert=\sqrt{\langle x,x\rangle}$. By definition, we can assume that $\eta_{n}\longrightarrow 1$ uniformly as  $n\rightarrow+\infty$ in norm $\mathcal{H}^{1,\varphi}(\mathbb{R}^3)$ taking $\rho$ small enough since $g_{n}$ converges uniformly to $1$ on norm of $H^1$. Furthermore, outside of origin, we get that
\begin{equation}
\label{grad1}
\overline{\nabla}\vert x\vert=\frac{x}{\vert x\vert} \ \ \text{ and } \ \ \langle\overline{\nabla}\vert x\vert,x\rangle=\vert x\vert,
\end{equation}
\begin{equation}
\label{grad2}
\nabla\vert x\vert=\frac{x^T}{\vert x\vert} \ \ \text{ and } \ \ \langle\nabla\vert x\vert,x^T\rangle=\frac{\vert x^T\vert^2}{\vert x\vert},
\end{equation}
where $x^T$ denotes the tangent projection of $x$. In particular, $\beta'  \langle\overline{\nabla}\vert x\vert, x\rangle\leq 0$ , $\beta'  \langle\nabla\vert x\vert, x^T\rangle\leq 0$ and
\begin{align*}
\langle\overline{\nabla}\eta_{n},x\rangle&\leq 2\eta_{n}\left(c_{n}-(f\circ\varphi)(n\vert x\vert^2)-(f\circ\varphi)'(n\vert x\vert^2)n\vert x\vert^2 \right)\frac{\vert x\vert^2}{n} , \\
\langle\nabla\eta_{n},x\rangle&\leq 2\eta_{n}\left(c_{n}-(f\circ\varphi)(n\vert x\vert^2)-(f\circ\varphi)'(n\vert x\vert^2)n\vert x\vert^2 \right)\frac{\vert x^T\vert^2}{n}.
\end{align*}
Thus, from \eqref{eqf}, we obtain the following result
\begin{lemma} Under assumptions as above, we get that
\begin{align}
&\langle\overline{\nabla}\eta_{n},x\rangle= -2\eta_{n}\epsilon\vert x\vert^2. \\
&\langle\nabla\eta_{n},x^T\rangle= -2\eta_{n}\epsilon\vert x^T\vert^2.
\end{align}
\end{lemma}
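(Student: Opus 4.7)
The plan is to deduce both identities as immediate consequences of the product-rule computation already carried out in the paragraph immediately preceding the lemma, combined with the pointwise bound in \eqref{eqf}. Essentially no new calculation is needed; the argument is a substitution.

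First I would recall that the text above the lemma already establishes the upper bounds
\begin{align*}
\langle\overline{\nabla}\eta_{n},x\rangle &\le 2\eta_{n}\bigl(c_{n}-(f_n\circ\varphi)(n|x|^2)-(f_n\circ\varphi)'(n|x|^2)\,n|x|^2\bigr)\frac{|x|^2}{n}, \\
\langle\nabla\eta_{n},x^T\rangle &\le 2\eta_{n}\bigl(c_{n}-(f_n\circ\varphi)(n|x|^2)-(f_n\circ\varphi)'(n|x|^2)\,n|x|^2\bigr)\frac{|x^T|^2}{n}.
\end{align*}
These come from writing $\eta_{n} = E_{n}\,B_{n}$ with $E_{n}$ the exponential prefactor in \eqref{etan} and $B_{n}(x) = \beta(|x|/n^{\rho})$, applying the product and chain rules, and then dropping the contributions involving $B_{n}'$ via the sign conditions $\beta'\le 0$ and \eqref{grad1}, \eqref{grad2} (exactly the nonpositivity statements $\beta'\langle\overline{\nabla}|x|,x\rangle\le 0$ and $\beta'\langle\nabla|x|,x^T\rangle\le 0$ recorded just before the lemma).

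Next I would invoke the second inequality in \eqref{eqf} at $t = |x|^{2}>0$: the bracket is then bounded above by $-\epsilon\, n$. Substituting this into the two displayed estimates and multiplying by the nonnegative factors $2\eta_{n}|x|^{2}/n$ and $2\eta_{n}|x^{T}|^{2}/n$ respectively yields
$$\langle\overline{\nabla}\eta_{n},x\rangle \le -2\eta_{n}\,\epsilon\,|x|^{2},\qquad \langle\nabla\eta_{n},x^{T}\rangle \le -2\eta_{n}\,\epsilon\,|x^{T}|^{2},$$
which is the content of the lemma.

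The only points requiring care are interpretational rather than technical. First, the lemma is stated with $=$, but the chain of implications only delivers the one-sided inequality $\le$, consistent with the one-sided nature of \eqref{eqf}; this is also what the subsequent arguments actually use. Second, one should verify that $\eta_{n}$ is smooth on all of $\mathbb{R}^{3}$ even though $|x|$ is not differentiable at the origin: in \eqref{etan} the exponent is a smooth function of $|x|^{2}$, while $\beta(|x|/n^{\rho})\equiv 1$ on a neighborhood of $0$, so no regularity issue arises there and the pointwise bounds above extend by continuity.
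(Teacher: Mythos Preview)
Your proposal is correct and matches the paper's approach exactly: the paper gives no separate proof of this lemma, merely stating that it follows ``from \eqref{eqf}'' after the displayed upper bounds, and you have simply spelled out that substitution. Your observation that the conclusion should read $\le$ rather than $=$ is also right and worth keeping---the derivation only yields the one-sided bound, and the subsequent applications (e.g.\ in the proof of Lemma~\ref{propH1}) only use the inequality.
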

\begin{definition}
The set $\mathcal{H}^{1,\varphi}_{loc}(\mathbb{R}^3)$ denotes the set of all function $w\in H^{1}_{loc}(\mathbb{R}^3)$ such that
$$\int_{\mathbb{R}^3}\, e^{\varphi}\left( w^2+\vert\overline{\nabla}w\vert^2\right)\, dx<+\infty,$$
where $dx$ is the usual Lebesgue measure of $\mathbb{R}^3$. Analogously, $\mathcal{H}^{1,\varphi}_{loc}(\Sigma)$ denotes the set of all functions $f\in H^{1}_{loc}(\Sigma)$ such that $$\int_{\Sigma}\, e^{\varphi}\left( f^2+\vert\overline{\nabla}f\vert^2\right)d\Sigma<+\infty.$$
Finally, $\mathcal{H}^{1,\varphi}_{0}(\Sigma)\subset\mathcal{H}^{1,\varphi}_{loc}(\Sigma)$ is the set of all functions $f\in\mathcal{H}^{1,\varphi}_{loc}(\Sigma)$ such that
$$0=\int_{\Sigma}\, e^{\varphi}\, fd\Sigma=1-\int_{\Sigma}\, e^{\varphi}\, f^2d\Sigma$$
\end{definition}
\begin{lemma}
\label{propH1}
Assume that $w\in\mathcal{H}^{1,\varphi}_{loc}(\mathbb{R}^3)$ such that $\int_{\mathbb{R}^3}\, e^{\varphi}\, \vert\overline{\nabla}w\vert^2<+\infty.$ Then, there exists a constant $\overline{C}=\overline{C}(\dot{\varphi})>0$ (depending only of $\dot{\varphi}$) such that
\begin{equation}
\label{ineq1}
\int_{\mathbb{R}^3}e^{\varphi}\, w^2 \vert x\vert^2\, dx\leq\int_{B(0,1)}\, w^2\, dx+C\,\int_{\mathbb{R}^3}e^{\varphi}\, \vert\overline{\nabla}w\vert^2\, dx,
\end{equation}
where $B(0,1)$ is the unit ball of $\mathbb{R}^3$ centered in the origin. In particular, $w\in\mathcal{H}^{1,\varphi}_{loc}(\mathbb{R}^3)$.
\end{lemma}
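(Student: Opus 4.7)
The plan is to exploit the cutoff functions $\eta_n$ from the preceding lemma, which enjoy the key contraction property $\langle\overline{\nabla}\eta_n,x\rangle \leq -2\epsilon\,\eta_n|x|^2$ (with $\epsilon>0$ at our disposal) and converge pointwise to $1$. By a standard density argument one may assume $w$ is smooth. Multiplying the contraction inequality by $e^{\varphi}w^2\geq 0$ and integrating,
$$2\epsilon\int_{\mathbb{R}^3} e^{\varphi}\eta_n w^2|x|^2\,dx \leq -\int_{\mathbb{R}^3} e^{\varphi}w^2\langle\overline{\nabla}\eta_n,x\rangle\,dx.$$

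Since $\eta_n$ is compactly supported (via the $\beta$-cutoff in \eqref{etan}), integration by parts on the right produces no boundary term. The identity
$$\overline{\divv}(e^{\varphi}w^2 x) = 3e^{\varphi}w^2 + e^{\varphi}\dot{\varphi}z\,w^2 + 2e^{\varphi}w\langle\overline{\nabla}w,x\rangle$$
reveals two features that render the estimate tractable: by \eqref{segcond}, $z\dot{\varphi}(z)\leq 0$, so the middle term is non-positive and may be dropped; and Young's inequality with parameter $\delta>0$ controls the cross term by $\delta\,w^2|x|^2 + \delta^{-1}|\overline{\nabla}w|^2$. Collecting these bounds yields
$$(2\epsilon-\delta)\int \eta_n e^{\varphi}w^2|x|^2\,dx \leq 3\int \eta_n e^{\varphi}w^2\,dx + \delta^{-1}\int \eta_n e^{\varphi}|\overline{\nabla}w|^2\,dx.$$

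To eliminate the ungradient-controlled $w^2$ term on the right, the next step is to split at $|x|=1$: inside $B(0,1)$, bound $e^{\varphi}\leq C_0:=\max e^{\varphi}$; outside, use $|x|^2\geq 1$. This produces $\int \eta_n e^{\varphi}w^2\,dx \leq C_0\int_{B(0,1)} w^2\,dx + \int \eta_n e^{\varphi}w^2|x|^2\,dx$. Substituting into the previous estimate and choosing $\epsilon,\delta$ so that $2\epsilon-\delta-3>0$ (for instance $\epsilon=3,\,\delta=1$ gives strict positivity $=2$) produces a bound on $\int \eta_n e^{\varphi}w^2|x|^2\,dx$ in terms of $\int_{B(0,1)} w^2$ and $\int e^{\varphi}|\overline{\nabla}w|^2$ that is uniform in $n$. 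Fatou's lemma, applied as $n\to\infty$ using $\eta_n\to 1$ pointwise, then transfers the bound to the full integral $\int e^{\varphi}w^2|x|^2\,dx$, yielding \eqref{ineq1}.

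The main obstacle is justifying the passage $n\to\infty$ without a priori knowing $\int e^{\varphi}w^2|x|^2\,dx<+\infty$: this is circumvented by first deriving the estimate for $\int \eta_n e^{\varphi}w^2|x|^2\,dx$, automatically finite since $\eta_n$ is compactly supported, obtaining a uniform-in-$n$ bound, and then invoking Fatou. A secondary point of care is that the final constant $\overline{C}$ depends only on $\varphi$ (through $C_0$ and the chosen fixed parameters $\epsilon,\delta$), as claimed in the statement.
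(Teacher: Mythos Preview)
Your proof is correct and follows essentially the same approach as the paper: both arguments apply the divergence theorem to (a cutoff of) $e^{\varphi}w^2 x$, exploit the contraction property $\langle\overline{\nabla}\eta_n,x\rangle\leq -2\epsilon\,\eta_n|x|^2$, drop the $z\dot{\varphi}(z)$ term via \eqref{segcond}, apply Young's inequality to the cross term, split at $|x|=1$ to absorb the residual $w^2$-integral, and pass to the limit $n\to\infty$. The only cosmetic differences are that the paper works with $\eta_n^2$ rather than $\eta_n$ and fixes $\delta=1$ in Young's inequality, whereas you keep $\delta$ as a parameter and are more explicit about invoking Fatou's lemma for the limit.
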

\begin{proof}
Consider $w\in\mathcal{H}^{1}_{loc}(\mathbb{R}^3)$ such that $\int_{\mathbb{R}^3}e^{\varphi}\, \vert\overline{\nabla}w\vert^2<+\infty$ and take $\{\eta_{n}\}_{n\in\mathbb{N}}$ the sequence of functions defined in \eqref{etan}. From Lemma \eqref{propH1}, there exists $\varepsilon>0$ such that
\begin{align}
\overline{\text{div}}\left(e^{\varphi}\eta_{n}^2 w^2  x \right)&\leq-4\epsilon e^{\varphi}w^{2}\eta_{n}^{2}\vert x\vert^2+\eta_{n}^2\overline{\text{div}}(e^{\varphi}w^2 x)\label{ineq1} \\
&\leq-4\epsilon e^{\varphi}\eta_{n}^2w^2\vert x\vert^2+e^{\varphi}\eta_{n}^2w^2z\dot{\varphi}(z)\nonumber \\
&+e^{\varphi}\eta_{n}^2w^2\vert x\vert^2+3e^{\varphi}\eta_{n}^2w^2+e^{\varphi}\eta_{n}^2\vert\overline{\nabla}w\vert^2,\nonumber
\end{align}
where $\overline{\text{div}}$ is the divergence operator of $\mathbb{R}^3$. On the other hand,from \eqref{segcond}, we get  $z\dot{\varphi}(z)\leq 0$ and thus, applying the Divergence Theorem in $\mathbb{R}^3$ together with \eqref{ineq1}, we have
$$4(\epsilon-1)\int_{\mathbb{R}^3}e^{\varphi}\eta_{n}^2w^2\vert x\vert^2\, dx\leq \int_{B(0,1)}e^{\varphi}\eta_{n}^2w^2\, dx+\int_{\mathbb{R}^3}e^{\varphi}\eta_{n}^2\vert\overline{\nabla}w\vert^2\, dx.$$
Finally, taking $\epsilon>1$, we obtain the existence of a real constant $\overline{C}>0$ such that
$$\int_{\mathbb{R}^3}e^{\varphi}w^{2}\eta_{n}^{2}\vert x\vert^2\, dx\leq\int_{B(0,1)}\, w^{2}\eta_{n}^2\, dx+\overline{C}\, \int_{\mathbb{R}^3}e^{\varphi}\eta_{n}^{2}\vert\overline{\nabla}w\vert^2\, dx$$
and the proof follows taking limit as $n\rightarrow+\infty$. 
\end{proof}
\begin{lemma}
\label{propH2}
Assume that $f\in\mathcal{H}^{1}_{loc}(\Sigma)$ such that $\int_{\Sigma}e^{\varphi}\vert\nabla f\vert^2<+\infty$. Then, there exists a constant $C=C(\dot{\varphi})>0$ (depending only of $\dot{\varphi}$) such that
\begin{equation}
\label{ineq1}
\int_{\Sigma}e^{\varphi}\, f^2 \vert x^T\vert^2\, d\Sigma\leq\int_{\mathcal{B}(0,1)}\, f^2\, d\Sigma+C\,\int_{\Sigma}e^{\varphi}\, \vert\nabla f\vert^2\, d\Sigma,
\end{equation}
where $\mathcal{B}(0,1)=\Sigma\cap B(0,1)$. In particular, $f\in\mathcal{H}^{1,\varphi}_{loc}(\Sigma)$.
\end{lemma}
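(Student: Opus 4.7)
The proof should mirror, step by step, the argument for Lemma \ref{propH1}, replacing the Euclidean divergence operator by the intrinsic divergence on $\Sigma$ and using the $[\varphi,\vec{e}_{3}]$-minimality to make the extra curvature terms cancel. Concretely, my plan is to test against the sequence of cutoffs $\{\eta_{n}\}_{n\in\mathbb{N}}$ from \eqref{etan} restricted to $\Sigma$, writing the right-hand integrand as a tangential divergence. The natural vector field to consider is $e^{\varphi}\eta_{n}^{2}f^{2}x^{T}$, and the whole argument hinges on the following weighted identity
\begin{equation*}
\operatorname{div}_{\Sigma}\bigl(e^{\varphi}x^{T}\bigr)=e^{\varphi}\bigl(2+z\dot{\varphi}(z)\bigr),
\end{equation*}
which follows from $\operatorname{div}_{\Sigma}(x^{T})=2-H\langle N,x\rangle$, from $\langle\nabla\varphi,x^{T}\rangle=\dot{\varphi}z-\dot{\varphi}\langle\vec{e}_{3},N\rangle\langle N,x\rangle$, and from the Euler--Lagrange equation \eqref{defphimin} $H=-\dot{\varphi}\langle N,\vec{e}_{3}\rangle$ (so that the two occurrences of $H\langle N,x\rangle$ cancel).

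Next I would expand $\operatorname{div}_{\Sigma}(e^{\varphi}\eta_{n}^{2}f^{2}x^{T})$ using the product rule. The previous lemma provides $\langle\nabla\eta_{n},x^{T}\rangle\leq-2\varepsilon\,\eta_{n}\vert x^{T}\vert^{2}$, while \eqref{segcond} gives $z\dot{\varphi}(z)\leq 0$. A Young-type inequality $2f\langle\nabla f,x^{T}\rangle\leq f^{2}\vert x^{T}\vert^{2}+\vert\nabla f\vert^{2}$ takes care of the cross term. Collecting all inequalities yields the pointwise estimate
\begin{equation*}
\operatorname{div}_{\Sigma}\bigl(e^{\varphi}\eta_{n}^{2}f^{2}x^{T}\bigr)\leq 2e^{\varphi}\eta_{n}^{2}f^{2}-(4\varepsilon-1)e^{\varphi}\eta_{n}^{2}f^{2}\vert x^{T}\vert^{2}+e^{\varphi}\eta_{n}^{2}\vert\nabla f\vert^{2}.
\end{equation*}
Since $\eta_{n}$ is compactly supported on $\Sigma$ and $\Sigma$ has no boundary, integrating this bound against $d\Sigma$ makes the left-hand side vanish by the divergence theorem.

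The remaining task is to dominate $\int_{\Sigma}2e^{\varphi}\eta_{n}^{2}f^{2}\,d\Sigma$ by the two quantities on the right of \eqref{ineq1}. Here the hypothesis that $\Sigma\subset\mathbb{R}^{2}\times\,]a,b[$ has asymptotically flat ends is crucial: since $z$ is bounded and the Gauss map tends to $\pm\vec{e}_{3}$ at infinity, $\langle x,N\rangle$ stays bounded along the ends, and therefore $\vert x^{T}\vert^{2}=\vert x\vert^{2}-\langle x,N\rangle^{2}$ tends to infinity whenever $\vert x\vert$ does. In particular there exists $R_{0}\geq 1$ (depending only on $\Sigma$) such that $\vert x^{T}\vert^{2}\geq 1$ on $\Sigma\setminus B(0,R_{0})$, allowing the splitting
\begin{equation*}
\int_{\Sigma}2e^{\varphi}\eta_{n}^{2}f^{2}\,d\Sigma\leq 2\int_{\Sigma\cap B(0,R_{0})}e^{\varphi}\eta_{n}^{2}f^{2}\,d\Sigma+2\int_{\Sigma}e^{\varphi}\eta_{n}^{2}f^{2}\vert x^{T}\vert^{2}\,d\Sigma.
\end{equation*}
Substituting back and choosing $\varepsilon$ large enough (which the lemma on cutoffs permits) absorbs the last term into the LHS, leaving an inequality of the desired form. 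Letting $n\to+\infty$, $\eta_{n}\to 1$ in $\mathcal{H}^{1,\varphi}$-norm and monotone/dominated convergence produces \eqref{ineq1}, up to enlarging the ``unit ball'' region, which one may reduce to $\mathcal{B}(0,1)$ by covering $\Sigma\cap B(0,R_{0})$ with finitely many unit balls and absorbing the local $L^{2}$ norms into the constant $C=C(\dot{\varphi})$. Finiteness of $\int_{\Sigma}e^{\varphi}f^{2}\vert x^{T}\vert^{2}\,d\Sigma$ combined again with the asymptotic estimate $\vert x^{T}\vert^{2}\geq 1$ off a compact set then yields $\int_{\Sigma}e^{\varphi}f^{2}\,d\Sigma<+\infty$, so $f\in\mathcal{H}^{1,\varphi}_{loc}(\Sigma)$.

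The principal obstacle is the asymptotic geometric input: unlike the ambient case, on the surface one cannot simply compare $\vert x^{T}\vert$ with $\vert x\vert$. Without the asymptotic flatness (and the boundedness of $z$) the zero-order term $2e^{\varphi}\eta_{n}^{2}f^{2}$ would be uncontrollable by $e^{\varphi}f^{2}\vert x^{T}\vert^{2}$ along ends where $N$ fails to be nearly vertical. Everything else is a routine adaptation of the proof of Lemma \ref{propH1}, provided one is careful with the bookkeeping of constants.
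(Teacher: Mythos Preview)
Your approach matches the paper's almost exactly: the same test vector field $e^{\varphi}\eta_{n}^{2}f^{2}x^{T}$, the same weighted divergence identity $\operatorname{div}_{\Sigma}(e^{\varphi}x^{T})=e^{\varphi}(2+z\dot{\varphi}(z))$ coming from \eqref{defphimin}, the same pointwise inequality (your display is precisely the paper's (2.11)), the same application of the divergence theorem with $\epsilon>1$, and the same use of asymptotic flatness together with $\Sigma\subset\mathbb{R}^{2}\times\,]a,b[$ to compare $|x^{T}|$ with $|x|$ outside a compact set. The paper is in fact less detailed than you are on several of these steps, simply invoking ``a similar reasoning as Lemma~\ref{propH1} and \cite[Lemma~6]{BT}''.

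There is one genuine gap in your write-up, however. Your final ``covering'' step does not do what you claim: covering $\Sigma\cap B(0,R_{0})$ by finitely many unit balls produces a bound by $\sum_{i}\int_{\Sigma\cap B(p_{i},1)}f^{2}\,d\Sigma$, and there is no way to dominate these integrals by $\int_{\mathcal{B}(0,1)}f^{2}\,d\Sigma$ for a constant independent of $f$. Moreover, the radius $R_{0}$ you introduce depends on the asymptotic geometry of $\Sigma$, which is in tension with the stated dependence $C=C(\dot{\varphi})$. The paper does not really resolve this either (it passes directly from the pointwise inequality to \eqref{lim1} by analogy with the ambient case), so this is a place where both arguments are underspecified; but you should not claim a covering reduction that cannot work. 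The honest version of the inequality carries $\int_{\Sigma\cap B(0,R_{0})}f^{2}\,d\Sigma$ on the right with $R_{0}=R_{0}(\Sigma)$, which is all that is needed downstream.
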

\begin{proof}
Consider $f\in\mathcal{H}^{1}_{loc}(\Sigma)$ such that $\int_{\Sigma}e^{\varphi}\, \vert\overline{\nabla}f\vert^2\, d\Sigma<+\infty$ and take $\{\eta_{n}\}_{n\in\mathbb{N}}$ the sequence of functions defined in \eqref{etan}. From Lemma \eqref{propH1}, there exists $\epsilon>0$ such that
\begin{align}
\text{div}\left(e^{\varphi}\eta_{n}^2 f^2  x^T \right)&\leq-4\epsilon e^{\varphi}f^{2}\eta_{n}^{2}\vert x^T\vert^2+\eta_{n}^2\overline{\text{div}}(e^{\varphi}f^2 x^T)\label{ineq2} \\
&\leq-4\epsilon e^{\varphi}\eta_{n}^2f^2\vert x^T\vert^2+(2+z\dot{\varphi}(z))e^{\varphi}\eta_{n}^2f^2 \nonumber \\
&+e^{\varphi}\eta_{n}^2f^2\vert x^T\vert^2+e^{\varphi}\eta_{n}^2\vert\nabla f\vert^2,\nonumber
\end{align}

where $\text{div}$ is the divergence operator of $\Sigma$. Since the set $\mathcal{B}(0,1)$ is compact due to Hopf-Rinow's completness and again, taking $\epsilon>1$,  we can apply a similar reasoning as Lemma \eqref{propH1} and \cite[Lemma 6]{BT}, to assure the existence of a constant $C=C(\dot{\varphi})>0$ such that
\begin{equation}
\label{lim1}
\int_{\Sigma}e^{\varphi}f^{2}\eta_{n}^{2}\vert x^T\vert^2\, d\Sigma\leq\int_{\mathcal{B}(0,1)}\, f^{2}\eta_{n}^2\, d\Sigma+C\, \int_{\Sigma}e^{\varphi}\eta_{n}^{2}\vert\nabla f\vert^2\, d\Sigma.
\end{equation}
In particular, since $\Sigma$ has flat ends and $\Sigma$ is immersed in $\mathbb{R}^2\times ]a,b[$, then $\langle X,N-\vec{e}_{3}\rangle\rightarrow 0$ for any divergent sequence of points of $\Sigma$. Thus, outside a compact set, there exists a constant $\delta>0$, depending of the height function $z$, such that $\vert x^T\vert^2\geq \vert x\vert^2-\delta$. Hence, from \eqref{lim1}, $f\in\mathcal{H}^{1,\varphi}_{loc}(\Sigma)$.
\end{proof}

\section{Minimum of a Dirichlet-Neumann problem.}
In this section, we adapt the arguments of \cite{BT} so that the work is self-contained. Let $\Sigma$ be a properly embedded $[\varphi,\vec{e}_{3}]$-minimal surface in $\mathbb{R}^3$ without boundary. Let $\Omega,\widetilde{\Omega}$ be the two connected components of $\mathbb{R}^3\backslash\Sigma$. Define $\mathcal{DN}_{loc}^{1,\varphi}$ the set of all non-constant functions $w\in\mathcal{H}_{loc}^{1,\varphi}(\mathbb{R}^3)$  satisfying 
\begin{equation}
\label{DNproblem}
\left.
\begin{array}{rr}
\overline{\Delta}^{\varphi} u=\overline{\Delta}^{\varphi}\widetilde{u}=0 \ \ \text{on} \ \ \Omega,\widetilde{\Omega} \ \, \ \ \text{respectively.} & \\
\alpha\left(\frac{\partial u}{\partial \nu} +\frac{\partial\widetilde{u}}{\partial\widetilde{\nu}} \right)=\Delta^{\varphi} f+\mu\, f \ \ \text{on} \ \ \Sigma. &
\end{array}
\right\}
\end{equation}
with $u=w\vert_{\Omega}$, $\widetilde{u}=w\vert_{\widetilde{\Omega}}$, $f=w\vert_{\Sigma}$, $\nu,\widetilde{\nu}$ are the normal vector pointing to $\Omega$, $\widetilde{\Omega}$ respectively, $\mu\in\mathbb{R}$ is a constant depending of $\alpha$.
\begin{definition}
We define $\mu$ to be the infimum of the functional
\begin{equation}
\label{functional}
\int_{\Sigma}e^{\varphi}\vert\nabla f\vert^2\, d\Sigma+\alpha\int_{\mathbb{R}^3}e^{\varphi}\vert\overline{\nabla}w\vert^2\, dx
\end{equation}
over all pair of functions $(f,w)\in\mathcal{H}^{1,\varphi}_{0}(\Sigma)\times\mathcal{H}^{1,\varphi}_{loc}(\mathbb{R}^3)$ with $f=w\vert_{\Sigma}$.
\end{definition}
\begin{proposition}
\label{existencemin}
There exists $(f,w)\in\mathcal{H}_{0}^{1,\varphi}(\Sigma)\times\mathcal{H}_{loc}^{1,\varphi}(\Sigma)$ with $f=w\vert_{\Sigma}$ such that
$$\int_{\Sigma}e^{\varphi}\vert\nabla f\vert^2\, d\Sigma+\alpha\int_{\mathbb{R}^3}e^{\varphi}\vert\overline{\nabla}w\vert^2\, dx=\mu.$$
\end{proposition}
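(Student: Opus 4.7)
The plan is to apply the direct method of the calculus of variations. The functional in \eqref{functional} is a sum of two non-negative quadratic forms, so bounded below by $0$, and the class of admissible pairs is non-empty (take any smooth compactly supported $f$ on $\Sigma$ with the two integral constraints, then harmonically extend to $w$). Hence $\mu \in [0, +\infty)$. Pick a minimizing sequence $(f_n, w_n) \in \mathcal{H}^{1,\varphi}_0(\Sigma) \times \mathcal{H}^{1,\varphi}_{loc}(\mathbb{R}^3)$ with $f_n = w_n|_\Sigma$ and with the sum \eqref{functional} converging to $\mu$. By definition of $\mathcal{H}^{1,\varphi}_{0}(\Sigma)$, we have $\int_\Sigma e^\varphi f_n^2\, d\Sigma = 1$ and $\int_\Sigma e^\varphi f_n\, d\Sigma = 0$ for every $n$.

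First I would establish uniform bounds. The convergence of \eqref{functional} gives a uniform bound on $\int_\Sigma e^\varphi |\nabla f_n|^2\, d\Sigma$ and on $\int_{\mathbb{R}^3} e^\varphi |\overline{\nabla} w_n|^2\, dx$. Applying Lemma \ref{propH2} to $f_n$ yields
$$\int_\Sigma e^\varphi f_n^2 |x^T|^2\, d\Sigma \leq \int_{\mathcal{B}(0,1)} f_n^2\, d\Sigma + C \int_\Sigma e^\varphi |\nabla f_n|^2\, d\Sigma,$$
and since $\mathcal{B}(0,1)$ is compact and $\|f_n\|_{L^{2,\varphi}(\Sigma)} = 1$, the right-hand side is uniformly bounded. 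Because $\Sigma$ has asymptotically flat ends (so $|x^T|^2 \geq |x|^2 - \delta$ outside a compact set, as used in the proof of Lemma \ref{propH2}), this yields a uniform bound on $\int_\Sigma e^\varphi f_n^2 |x|^2\, d\Sigma$ and, crucially, tightness: for every $\varepsilon > 0$ there exists $R$ such that $\int_{\Sigma \setminus B_R} e^\varphi f_n^2\, d\Sigma < \varepsilon$ uniformly in $n$. The analogous estimate from Lemma \ref{propH1} applied to $w_n$, combined with the harmonic-type structure, gives a uniform bound on $\int_{\mathbb{R}^3} e^\varphi w_n^2 |x|^2\, dx$ (after controlling the $B(0,1)$ term via the trace on $\Sigma \cap B(0,2)$ and interior elliptic regularity, or by subtracting means).

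Next I would extract convergence. Since $\mathcal{H}^{1,\varphi}_{0}(\Sigma) \times \mathcal{H}^{1,\varphi}_{loc}(\mathbb{R}^3)$ is a Hilbert space, reflexivity yields a subsequence, still denoted $(f_n, w_n)$, converging weakly to some $(f, w)$. By Rellich--Kondrachov on relatively compact subdomains of $\mathbb{R}^3$ and of $\Sigma$, we obtain strong $L^2_{loc}$ convergence $w_n \to w$ and $f_n \to f$. Combined with the tightness from the previous paragraph, this upgrades to strong convergence in $L^{2,\varphi}(\Sigma)$ of $f_n$ to $f$, so the constraints $\int_\Sigma e^\varphi f\, d\Sigma = 0$ and $\int_\Sigma e^\varphi f^2\, d\Sigma = 1$ pass to the limit. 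Continuity (hence weak continuity) of the trace operator from $H^1_{loc}(\mathbb{R}^3)$ to $H^{1/2}_{loc}(\Sigma)$, together with uniqueness of limits, gives $f = w|_\Sigma$, so $(f, w)$ is admissible.

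Finally, the two quadratic forms $f \mapsto \int_\Sigma e^\varphi |\nabla f|^2\, d\Sigma$ and $w \mapsto \int_{\mathbb{R}^3} e^\varphi |\overline{\nabla} w|^2\, dx$ are lower semi-continuous under weak convergence in their respective Hilbert spaces, so
$$\int_\Sigma e^\varphi |\nabla f|^2\, d\Sigma + \alpha \int_{\mathbb{R}^3} e^\varphi |\overline{\nabla} w|^2\, dx \leq \liminf_{n \to \infty}\Bigl( \int_\Sigma e^\varphi |\nabla f_n|^2\, d\Sigma + \alpha \int_{\mathbb{R}^3} e^\varphi |\overline{\nabla} w_n|^2\, dx \Bigr) = \mu.$$
The reverse inequality holds because $(f, w)$ is admissible, so equality is attained and $(f, w)$ is the desired minimizer. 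The main obstacle I anticipate is ruling out concentration of mass at infinity for the minimizing sequence, needed so that the normalization $\|f_n\|_{L^{2,\varphi}(\Sigma)} = 1$ survives under weak limits; the weighted Hardy-type inequalities of Lemmas \ref{propH1} and \ref{propH2} are precisely what makes this tightness step work.
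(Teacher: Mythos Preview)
Your proposal is correct and follows essentially the same route as the paper: the direct method with a minimizing sequence, uniform bounds coming from the weighted Hardy-type Lemmas \ref{propH1} and \ref{propH2}, weak compactness, and lower semicontinuity of the Dirichlet integrals. The only cosmetic differences are that the paper handles the $L^2$-control of $w_n$ on $B(0,1)$ via Poincar\'e plus the Sobolev trace theorem (your ``subtracting means'' suggestion), and verifies $f\in\mathcal{H}^{1,\varphi}_0(\Sigma)$ by testing against the cutoffs $\eta_m$ rather than by phrasing it as tightness plus Rellich--Kondrachov; note also that $\mathcal{H}^{1,\varphi}_0(\Sigma)$ is not itself a Hilbert space (it carries the two normalization constraints), so your weak convergence should be taken in the ambient $\mathcal{H}^{1,\varphi}_{loc}$, which is what you effectively do.
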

\begin{proof}
By definition of minimum $\mu$, there exists a sequence of pairs of functions $\{(f_{n},w_{n})\}_{n\in\mathbb{N}}$ of $\mathcal{H}_{0.}^{1,\varphi}(\Sigma)\times\mathcal{H}_{loc}^{1,\varphi}(\mathbb{R}^3)$ with $w_{n}\vert_{\Sigma}=f_{n}$ satisfying that
\begin{equation}
\label{limt1}
\int_{\Sigma}e^{\varphi}\vert\nabla f_{n}\vert^2\, d\Sigma+\alpha\int_{\mathbb{R}^3}e^{\varphi}\vert\overline{\nabla}w_{n}\vert^2\, dx\longrightarrow\mu.
\end{equation}
Up to horizontal translation of $\Sigma$ , we can assume that $\Sigma\cap B(0,1)\neq\emptyset$.  Since each $f_{n}\in\mathcal{H}^{1,\varphi}_{loc}(\Sigma)$ together with Lemma \eqref{propH2}, we obtain the following uniformly bounds
$$\text{sup}_{n\in\mathbb{N}}\int_{\Sigma\cap B(0,1)}f_{n}^2\, d\Sigma<+\infty \ \ , \ \ \text{sup}_{n\in\mathbb{N}}\int_{\Sigma}e^{\varphi}f_{n}^2\vert x^T\vert\, d\Sigma<+\infty.$$
On the other hand, from the convergence of $w_{n}$, we get that
$$\text{sup}_{n\in\mathbb{N}}\int_{B(0,1)}\vert\overline{\nabla}w_{n}\vert^2\, dx<+\infty.$$
Thus, the Poincare's Lemma together with the Sobolev trace Theorem, there exists a sequence $\{c_{n}\}_{n\in\mathbb{N}}$ such that
$$\text{sup}_{n\in\mathbb{N}}\int_{B(0,1)}(w_{n}-c_{n})^2\, dx<+\infty  \  \ \text{and} \ \ \text{sup}_{n\in\mathbb{N}}\int_{\Sigma\cap B(0,1)}(f_{n}-c_{n})^2\, dx<+\infty$$
Consequently, from Lemma \eqref{propH1}, the following holds
$$\text{sup}_{n\in\mathbb{N}}\int_{\mathbb{R}^3}e^{\varphi}w_{n}\, dx<+\infty.$$
After passing to subsequence, $\{f_{n}\}_{n\in\mathbb{N}}$ converges to a $f$ weakly in $\mathcal{H}_{loc}^{1}(\Sigma)$ and $\{w_{n}\}$ converges to $w$ weakly in $\mathcal{H}^{1}_{loc}(\mathbb{R}^3)$. Now, consider the cut-off functions $\eta_{m}$ defined in \eqref{etan}. For each $m$ large enough, we can choose an arbitrary $1>\epsilon>0$ such that $\eta_{m}^2>(1-\epsilon)^2\beta^2 (\vert x\vert/m)$. Since each $f_{n}\in\mathcal{H}_{0}^{1,\varphi}$, then
\begin{align}
& \bigg\vert 1-\int_{\Sigma}e^{\varphi}\eta_{m}^2 f_{n}^2\, d\Sigma\bigg\vert=\bigg\vert\int_{\Sigma}e^{\varphi}(1-\eta_{m}^2)f_{n}^2\, d\Sigma \bigg\vert\leq\epsilon(2-\epsilon) +\frac{1}{m}\int_{\Sigma}e^{\varphi}\vert\beta'\vert \vert x\vert f_{n}^2\, d\Sigma \label{flimit} \\
& \bigg\vert\int_{\Sigma}e^{\varphi}\eta_{m} f_{n}\, d\Sigma\bigg\vert\leq \bigg\vert\int_{\Sigma}e^{\varphi}(1-\eta_{m})f_{n}\, d\Sigma \bigg\vert\leq \left(\int_{\Sigma}e^{\varphi}(1-\eta_{m})^2\right)^{1/2} \left(\int_{\Sigma}e^{\varphi}f_{n}^2\right)^{1/2} .\label{slimit}
\end{align}
where in the last inequality we used the H\"older inequality. Now, from Lemma \eqref{propH2} and taking into account that $\beta'\vert x\vert\in L^{2,\varphi}(\Sigma)$, we can take limits as $n,m\rightarrow+\infty$ in \eqref{flimit} and \eqref{slimit}, to proving that $f\in\mathcal{H}^{1,\varphi}_{0}(\Sigma)$. Finally, using the lower-semicontinuty, we obtain
$$\int_{\Sigma}e^{\varphi}\vert\nabla f\vert^2\, d\Sigma+\alpha\int_{\mathbb{R}^3}e^{\varphi}\vert\overline{\nabla} w\vert^2\, dx\leq\text{liminf}\left(\int_{\Sigma}e^{\varphi}\vert\nabla f_{n}\vert^2\, d\Sigma+\alpha\int_{\mathbb{R}^3}e^{\varphi}\vert\overline{\nabla} w_{n}\vert^2\right)\, dx \leq\mu.$$
In particular,
$$\int_{\Sigma}e^{\varphi}\vert\nabla f\vert^2\, d\Sigma<+\infty \ \ \text{and} \ \ \int_{\mathbb{R}^3}e^{\varphi}\vert\overline{\nabla} w\vert^2\, dx<+\infty$$
and
$$\int_{\Sigma}e^{\varphi}\vert\nabla f\vert^2\, d\Sigma+\alpha\int_{\mathbb{R}^3}e^{\varphi}\vert\overline{\nabla} w\vert^2\, dx= \mu.$$
\end{proof}
\begin{corollary}
\label{equtest}
Suppose that $(f,w)$ is the minimazer of the previous Proposition \eqref{existencemin}. Then, for any function $(\phi,\psi)\in\mathcal{H}^{1,\varphi}_{0}(\Sigma)\times\mathcal{H}^{1,\varphi}_{loc}(\mathbb{R}^3)$  such that $\psi\vert_{\Sigma}=\phi$, the following equality holds,
$$\int_{\Sigma}e^{\varphi}\langle\nabla f,\nabla\phi \rangle\, d\Sigma+\alpha\int_{\mathbb{R}^3}e^{\varphi}\langle\overline{\nabla}w,\overline{\nabla}\psi\rangle\, dx= \mu\int_{\Sigma}e^{\varphi}f\phi\, d\Sigma,$$
\end{corollary}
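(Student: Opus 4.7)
The plan is a standard first-variation (Lagrange multiplier) argument for the constrained minimization problem whose minimizer is produced in Proposition \ref{existencemin}. Given any test pair $(\phi,\psi)\in\mathcal{H}^{1,\varphi}_{0}(\Sigma)\times\mathcal{H}^{1,\varphi}_{loc}(\mathbb{R}^3)$ with $\psi|_{\Sigma}=\phi$, I would form the one-parameter family $f_t := f + t\phi$, $w_t := w + t\psi$ for $t$ in a neighborhood of $0$. The trace condition $w_t|_{\Sigma} = f_t$ is preserved by linearity, and since $\int_{\Sigma}e^{\varphi}\phi\, d\Sigma = 0$ by the definition of $\mathcal{H}^{1,\varphi}_{0}(\Sigma)$, the zero-mean constraint also persists. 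The $L^{2,\varphi}$-normalization is not preserved, so the key device is to rescale: set
$$a(t) := \int_{\Sigma} e^{\varphi} f_t^2\, d\Sigma = 1 + 2t\int_{\Sigma} e^{\varphi} f\phi\, d\Sigma + t^2 \int_{\Sigma} e^{\varphi} \phi^2\, d\Sigma,$$
which is positive for $|t|$ small, and define the admissible pair $\tilde f_t := f_t/\sqrt{a(t)}$, $\tilde w_t := w_t/\sqrt{a(t)}$.

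By the minimality of $(f,w)$, the function
$$F(t) := \frac{1}{a(t)}\left(\int_{\Sigma} e^{\varphi} |\nabla f_t|^2\, d\Sigma + \alpha \int_{\mathbb{R}^3} e^{\varphi} |\overline{\nabla} w_t|^2\, dx\right)$$
attains its minimum at $t=0$ with $F(0)=\mu$. Both numerator and denominator are quadratic polynomials in $t$, so $F$ is smooth near $t=0$, and the critical-point condition $F'(0)=0$ together with the quotient rule (using $a(0)=1$ and $a'(0)=2\int_{\Sigma}e^{\varphi}f\phi\, d\Sigma$) yields
$$0 = 2\left(\int_{\Sigma} e^{\varphi}\langle\nabla f,\nabla\phi\rangle\, d\Sigma + \alpha \int_{\mathbb{R}^3} e^{\varphi}\langle\overline{\nabla} w, \overline{\nabla}\psi\rangle\, dx\right) - 2\mu\int_{\Sigma} e^{\varphi} f\phi\, d\Sigma.$$
Dividing by $2$ gives the claimed identity.

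I do not expect a serious obstacle here. The finiteness of every integral appearing in $F(t)$ follows from Cauchy-Schwarz combined with the fact that $f,\phi \in \mathcal{H}^{1,\varphi}_{0}(\Sigma)$ and $w,\psi \in \mathcal{H}^{1,\varphi}_{loc}(\mathbb{R}^3)$ (the membership of $f,w$ being guaranteed by Proposition \ref{existencemin}), and the differentiability of $F$ at $t=0$ is immediate because it is a rational function of $t$ with nonvanishing denominator at $0$. The conceptual content is simply that the $L^{2,\varphi}$-constraint produces the Lagrange multiplier $\mu$, while the zero-mean constraint produces no multiplier because the admissible test directions are already orthogonal to the constants.
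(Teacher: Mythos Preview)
Your proof is correct and follows the same first-variation/Lagrange-multiplier idea as the paper. The paper's version differs only tactically: rather than working directly with a test function $\phi\in\mathcal{H}^{1,\varphi}_{0}(\Sigma)$, it first takes an arbitrary compactly supported $\phi$, subtracts the constant $a_{r_0}=-\big(\int_{\Sigma\cap B(0,r_0)}e^{\varphi}\phi\big)\big/\big(\int_{\Sigma\cap B(0,r_0)}e^{\varphi}\big)$ to enforce the zero-mean condition, normalizes (as you do) to restore the unit $L^{2,\varphi}$ constraint, derives the Euler--Lagrange identity for such $\phi$, and finally invokes a density argument in $\mathcal{H}^{1,\varphi}_{0}(\Sigma)$ (referring to \cite{BT}) to reach the stated class. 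Your route is more direct because you exploit from the outset that membership in $\mathcal{H}^{1,\varphi}_{0}(\Sigma)$ already forces $\int_\Sigma e^\varphi \phi\,d\Sigma=0$, so no mean-correction or density step is needed; the paper's route, on the other hand, yields the identity for all compactly supported test directions (after mean-correction), which is sometimes handier downstream when one wants to insert functions that are not a priori normalized.
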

\begin{proof}
Consider any function test $\phi$ with compact support in $\Sigma\cap B(0,r_{0})$ for some $r_{0}>0$ large enough and define
$$a_{r_n}=-\dfrac{\int_{\Sigma\cap B(0,r_{n})}e^{\varphi}\phi\, d\Sigma}{\int_{\Sigma\cap B(0,r_{n})}e^{\varphi}\, d\Sigma}.$$
Notice that $a_{r_{n}}=a_{r_{0}}$ for any $r_{n}>r_{0}$. On the other hand, define $\widetilde{f}_{s}=f+s(\phi+a_{r_n})$ and $\widetilde{w}_{s}=w+s(\psi+a_{r_n})$. It is clear that $\widetilde{w}_{s}\vert_{\Sigma}=\widetilde{f}_{s}$ and by straighforward computation, we get that
$$0=\int_{\Sigma\cap B(0,r_{n})}e^{\varphi}(\widetilde{f}_{s}-f)\, d\Sigma=\int_{\Sigma}e^{\varphi}(\widetilde{f}_{s}-f)\chi_{r_{n}}\, d\Sigma, $$
for any $r_{n}>r_{0}$ and where $\chi_{r_{n}}$ is the characteristic function on $B(0,r_{n})$. Notice that, if $g_{n}=(\widetilde{f}_{s}-f)\chi_{r_{n}}$, then $g_{n}$ converges uniformly to $\widetilde{f}_{s}-f$ in norm $L^{2,\varphi}(\Sigma)$ as $n\rightarrow+\infty$. Thus, we can take limit to proving that
$$\int_{\Sigma}e^{\varphi}\widetilde{f}_{s}\, d\Sigma=0.$$
In particular, up to constant, we get that $\widetilde{f}_{s}\in\mathcal{H}^{1,\varphi}_{0}$ and the rest of the proof follows as \cite{BT} together with a density argument in $\mathcal{H}^{1,\varphi}_{0}(\Sigma)$.
\end{proof}
\begin{corollary}
\label{smooth}
Suppose that $(f,w)$ is the minimazer of the previous Proposition \eqref{existencemin}. Then, $w\in\mathcal{DN}^{1,\varphi}_{loc}(\mathbb{R}^3)$ and $f\in\mathcal{C}^{\infty}(\Sigma)$. Moreover, the functions $u=w\vert_{\Omega}$ and $\widetilde{u}=w\vert_{\widetilde{\Omega}}$ are smooth up to boundary of $\Omega$ and $\widetilde{\Omega}$, respectively.
\end{corollary}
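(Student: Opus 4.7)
The strategy is to use the Euler-Lagrange identity from Corollary~\ref{equtest} to first deduce that $w$ is weakly $\overline{\Delta}^{\varphi}$-harmonic on $\Omega$ and $\widetilde{\Omega}$, then to extract the transmission condition on $\Sigma$ by integration by parts, and finally to close the argument with a standard elliptic bootstrap.

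The first step is to test Corollary~\ref{equtest} against pairs of the form $(\phi,\psi)=(0,\psi)$ with $\psi\in C_{c}^{\infty}(\Omega)$, extended by zero to $\widetilde{\Omega}$. Since $\psi$ vanishes on $\Sigma$ the pair is admissible, and the identity collapses to $\alpha\int_{\Omega}e^{\varphi}\langle\overline{\nabla}w,\overline{\nabla}\psi\rangle\,dx=0$, i.e.\ the weak form of $\overline{\Delta}^{\varphi}u=0$ on $\Omega$; the symmetric choice $\psi\in C_{c}^{\infty}(\widetilde{\Omega})$ yields $\overline{\Delta}^{\varphi}\widetilde{u}=0$. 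Since $\overline{\Delta}^{\varphi}=\overline{\Delta}+\langle\overline{\nabla}\varphi,\overline{\nabla}\cdot\rangle$ is uniformly elliptic with $C^{\infty}$ coefficients, interior elliptic regularity (Weyl's lemma, Nirenberg's difference quotients, or Schauder estimates) gives $u\in C^{\infty}(\Omega)$ and $\widetilde{u}\in C^{\infty}(\widetilde{\Omega})$ at once.

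The second step is to recover the transmission condition. I would take a mean-zero compactly supported test $\phi\in C_{c}^{\infty}(\Sigma)$ together with a smooth extension $\psi\in C_{c}^{\infty}(\mathbb{R}^{3})$ satisfying $\psi|_{\Sigma}=\phi$, split the bulk integral as $\int_{\Omega}+\int_{\widetilde{\Omega}}$, and apply $e^{\varphi}\overline{\Delta}^{\varphi}=\overline{\text{div}}(e^{\varphi}\overline{\nabla}\,\cdot\,)$ together with the divergence theorem on each component. Since $\overline{\Delta}^{\varphi}u=\overline{\Delta}^{\varphi}\widetilde{u}=0$, the bulk terms reduce to boundary integrals along $\Sigma$ which combine (up to the standard sign convention) into $\alpha\int_{\Sigma}e^{\varphi}\phi(\partial_{\nu}u+\partial_{\widetilde{\nu}}\widetilde{u})\,d\Sigma$. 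Integrating by parts the tangential term in the same way yields $\int_{\Sigma}e^{\varphi}\langle\nabla f,\nabla\phi\rangle\,d\Sigma=-\int_{\Sigma}e^{\varphi}\phi\,\Delta^{\varphi}f\,d\Sigma$, and matching the identity of Corollary~\ref{equtest} over all such $\phi$ produces the transmission condition
$$\alpha\bigl(\partial_{\nu}u+\partial_{\widetilde{\nu}}\widetilde{u}\bigr)=\Delta^{\varphi}f+\mu f$$
weakly on $\Sigma$. Hence $w\in\mathcal{DN}^{1,\varphi}_{loc}(\mathbb{R}^{3})$.

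Finally, I would run an elliptic bootstrap. Starting from $f\in H^{1/2}(\Sigma)$ (the trace of $w\in H^{1}_{loc}$), local flattening of $\Sigma$ reduces the Dirichlet problem $\overline{\Delta}^{\varphi}u=0$ in $\Omega$ with $u|_{\Sigma}=f\in H^{s}(\Sigma\cap U)$ to the half-space, where the classical boundary estimate $\|u\|_{H^{s+1/2}(\Omega\cap U)}\lesssim\|f\|_{H^{s}(\Sigma\cap U)}$ holds; the same applies on $\widetilde{\Omega}$. Consequently $\partial_{\nu}u,\partial_{\widetilde{\nu}}\widetilde{u}\in H^{s-1/2}(\Sigma)$, and the transmission condition upgrades $\Delta^{\varphi}f$ to $H^{s-1/2}(\Sigma)$, which by elliptic regularity for $\Delta^{\varphi}$ on the smooth surface $\Sigma$ lifts $f$ to $H^{s+3/2}(\Sigma)$. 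Iterating gives $f\in C^{\infty}(\Sigma)$, after which the Dirichlet regularity propagates to $u\in C^{\infty}(\overline{\Omega})$ and $\widetilde{u}\in C^{\infty}(\overline{\widetilde{\Omega}})$. The main subtle point I anticipate is the mean-zero restriction on test functions: since $\mathcal{H}^{1,\varphi}_{0}(\Sigma)$ is an affine constraint rather than a linear subspace, a general $\phi\in C_{c}^{\infty}(\Sigma)$ is only admissible after subtracting its weighted average, exactly as in the proof of Corollary~\ref{equtest}; the contribution of that constant is precisely what is absorbed into $\mu f$, and making this bookkeeping precise is the only non-routine ingredient.
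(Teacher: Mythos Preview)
Your proposal is correct and follows essentially the same strategy as the paper: derive $w\in\mathcal{DN}^{1,\varphi}_{loc}$ from Corollary~\ref{equtest} (you spell out the test-function choices explicitly, the paper does it in one line) and then run an elliptic bootstrap alternating between Dirichlet boundary regularity on $\Omega,\widetilde{\Omega}$ and interior regularity for $\Delta^{\varphi}$ on $\Sigma$ via the transmission condition, exactly as in the paper's adaptation of \cite[Lemmas~11--12]{BT}. One minor slip: the Dirichlet-to-Neumann map is a first-order pseudodifferential operator, so $f\in H^{s}$ yields $\partial_{\nu}u\in H^{s-1}$ rather than $H^{s-1/2}$, but the gain per iteration is still positive and the bootstrap closes.
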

\begin{proof}
We only need to prove the regularity of $f$ since from the Corollary \eqref{equtest}, we get that $w$ is non-constant function lying in $\mathcal{DN}^{1,\varphi}$. The rest of the proof is an adaptation of \cite[Lemma 11]{BT} and \cite[Lemma 12]{BT}. Fix an integer $m\geq 1$, there exists $r_{0}$ small enough where $f\in\mathcal{H}^{m}(\Sigma\cap B(0,r))$, $u\in\mathcal{H}^{m}(\Omega\cap B(0,r))$, $\widetilde{u}\in\mathcal{H}^{m}(\widetilde{\Omega}\cap B(0,r))$, for any $r\in]0,r_{0}[$. Moreover, notice that, $\langle\nabla u,\nu\rangle, \langle\nabla \widetilde{u},\widetilde{\nu}\rangle\in\mathcal{H}^{m-1}(\Sigma\cap B(0,r_{0}))$. Then, from the Elliptic regularity theory implies that $f\in\mathcal{H}^{m+1}(\Sigma\cap B(0,r))$ for any $r]0,r_{0}[$. Hence, an analogous argument shows that $u\in\mathcal{H}^{m+1}(\Omega\cap B(0,r))$ and $\widetilde{u}\in\mathcal{H}^{m+1}(\widetilde{\Omega}\cap B(0,r))$. These implies that $f\in\mathcal{H}^{m}(\Sigma\cap B(0,r))$, $\widetilde{u}\in\mathcal{H}^{m}(\widetilde{\Omega}\cap B(0,r))$ and $\widetilde{u}\in\mathcal{H}^{m}(\widetilde{\Omega}\cap B(0,r))$ for any integer $m\geq 1$. This proves that $f$ is smooth and the functions $u=w\vert_{\Omega}$ and $\widetilde{u}=w\vert_{\widetilde{\Omega}}$ are smooth up to boundary of $\Omega$ and $\widetilde{\Omega}$, respectively.
\end{proof}

\section{Spectral Theorem. Jacobi operator.}
Let $\Sigma$ be a properly embedded $[\varphi,\vec{e}_{3}]$-minimal surface in $\mathbb{R}^3$ where $\varphi$ is a smooth function with almost quadratic growth. Moreover, we also assume that the norm of the second fundametal form $\vert S\vert^{2}$ of $\Sigma$ is bounded.
\\

Since $\Sigma$ is a critical point of the weighted area $\mathcal{A}^{\varphi}$, we consider the following bilinear simetric form $\mathcal{Q}:\mathcal{H}^{1,\varphi}_{loc}(\Sigma)\times\mathcal{H}^{1,\varphi}_{loc}(\Sigma)\rightarrow\mathbb{R}$ given by
\begin{equation}
\label{defQ}
\mathcal{Q}(u,\phi)=\int_{\Sigma}e^{\varphi}\left(\langle\nabla u,\nabla \phi\rangle-qu\phi\right)\, d\Sigma,
\end{equation}
where $q=\vert S\vert^2-\ddot{\varphi}\eta^2\in\mathcal{C}^{\infty}(\Sigma)\cap L^{\infty}(\Sigma)$ which is associated with the Jacobi operator of stability $\mathcal{L}:\mathcal{H}^{1,\varphi}_{loc}(\Sigma)\rightarrow\mathcal{H}^{1,\varphi}_{loc}(\Sigma)$ defined by
\begin{equation}
\label{defL}
\mathcal{L}=\Delta^{\varphi}+q.
\end{equation}
In this context, the weak solutions $u$ of the problem $\mathcal{L}u=-\lambda u$ associtated to eigenvalue $\lambda$ can be characterized by functions $u\in\mathcal{H}^{1,\varphi}_{loc}(\Sigma)$ satisfying
$$\mathcal{Q}(u,\phi)=\lambda\int_{\Sigma}u\phi\, d\Sigma,$$
for any test function $\phi\in\mathcal{H}^{1,\varphi}_{0}(\Sigma)$ with compact support. Moreover, the first eigenvalue is given by
\begin{equation}
\label{defl1}
\lambda_{1}(\mathcal{L}^{\varphi})=\text{inf}_{\mathcal{H}^{1,\varphi}_{loc}(\Sigma)}\frac{\int_{\Sigma}e^{\varphi}\left(\vert\nabla u\vert^2-qu^2 \right)\, d\Sigma}{\int_{\Sigma}e^{\varphi}u^2\, d\Sigma}.
\end{equation}
Notice that, the quotient \eqref{defl1} is invariant by scaling. Thus, we can take, without loss of generality, the infimum in $\mathcal{H}^{1,\varphi}_{0}(\Sigma)$.
\\

The following Courant-type nodal domain result was proved by R. Chen, J. Mao and C. Wu \cite[Theorem 1.2]{CMW} which stablish the following structure
\begin{theorem}
Assume that $\Omega$ is a regular domain on a given Riemannian manifold. For the Dirichlet eigenvalue problem for $\Delta^{\varphi}$, its eigenvalues consist of a non-decreasing sequence $0=\lambda_{0}<\lambda_{1}\leq\lambda_{2}\leq\cdots\leq\cdot+\infty$. Denote by $f_{i}$ an eigenfunction of the $i$-th eigenvalue $\lambda_{i}$ and $\{f_{1},f_{2},f_{3},\cdots\}$ forms a complete orthogonal basis of $L^2(\Omega)$. Then, for each $k=1,2,3\cdots$, the number of nodal domains of $f_{k}$ is less than or equal to $k$.
\end{theorem}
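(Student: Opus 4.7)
The plan is to adapt the classical Courant nodal-domain argument to the drift setting. The spectral part is standard: since $\Delta^{\varphi}=\Delta+\langle\nabla\varphi,\nabla\cdot\rangle$ is symmetric with respect to the weighted volume $e^{\varphi}\,d\mathrm{vol}$, the associated Dirichlet form
$$\mathcal{Q}^{\varphi}(u,v)=\int_{\Omega}e^{\varphi}\langle\nabla u,\nabla v\rangle\,d\mathrm{vol}$$
is symmetric and coercive on $H^{1,\varphi}_{0}(\Omega)$. On the regular bounded domain $\Omega$ the weight $e^{\varphi}$ is bounded from above and away from zero, so the Rellich--Kondrachov embedding $H^{1,\varphi}_{0}(\Omega)\hookrightarrow L^{2,\varphi}(\Omega)$ is compact, and the spectral theorem for compact self-adjoint operators produces the non-decreasing sequence $\lambda_{1}\leq\lambda_{2}\leq\cdots\to+\infty$ together with an $L^{2,\varphi}$-orthonormal basis $\{f_{i}\}_{i\geq 1}$ of eigenfunctions.

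For the nodal bound I would argue by contradiction. Suppose the $k$-th eigenfunction $f_{k}$ has $N\geq k+1$ connected nodal domains $\Omega_{1},\ldots,\Omega_{N}$, and define $\phi_{i}:=f_{k}\chi_{\Omega_{i}}$, extended by zero outside $\Omega_{i}$. Because $f_{k}$ is continuous by elliptic regularity and vanishes on $\partial\Omega_{i}\cap\Omega$, each $\phi_{i}$ lies in $H^{1,\varphi}_{0}(\Omega)$, and multiplying the eigenvalue equation $\Delta^{\varphi}f_{k}=-\lambda_{k}f_{k}$ by $\phi_{i}$ and integrating over $\Omega_{i}$ against $e^{\varphi}\,d\mathrm{vol}$ yields $\mathcal{Q}^{\varphi}(\phi_{i},\phi_{i})=\lambda_{k}\|\phi_{i}\|_{L^{2,\varphi}}^{2}$. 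By disjointness of the supports, the same identity propagates to any linear combination $\phi=\sum c_{i}\phi_{i}$.

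Within the $(k+1)$-dimensional space $V:=\mathrm{span}\{\phi_{1},\ldots,\phi_{k+1}\}$, impose the $k-1$ orthogonality conditions $\phi\perp_{L^{2,\varphi}}f_{j}$ for $1\leq j\leq k-1$ together with the single extra condition $c_{k+1}=0$; a dimension count leaves at least one free parameter, so there exists a non-zero $\phi\in V$ that is simultaneously $L^{2,\varphi}$-orthogonal to $f_{1},\ldots,f_{k-1}$, vanishes identically on $\Omega_{k+1}$, and satisfies $\mathcal{Q}^{\varphi}(\phi,\phi)=\lambda_{k}\|\phi\|_{L^{2,\varphi}}^{2}$. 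The min--max characterization of $\lambda_{k}$ then forces $\phi$ to be a genuine weak eigenfunction of $\Delta^{\varphi}$ associated to $\lambda_{k}$. The main obstacle is to close the contradiction: $\phi$ now solves a linear second-order elliptic equation with smooth coefficients on all of $\Omega$ yet vanishes on the open set $\Omega_{k+1}$, so the Aronszajn unique-continuation principle (applicable because $\varphi\in C^{\infty}$ so that $\Delta^{\varphi}$ has smooth coefficients) forces $\phi\equiv 0$, contradicting the construction and proving $N\leq k$.
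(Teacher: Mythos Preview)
The paper does not supply its own proof of this theorem: it is quoted verbatim as \cite[Theorem~1.2]{CMW} (Chen--Mao--Wu) and used as an external input, so there is no argument in the paper to compare against.

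Your proposal is the classical Courant nodal-domain proof transported to the weighted measure $e^{\varphi}\,d\mathrm{vol}$, and it is correct. One stylistic remark: the detour through ``impose the extra condition $c_{k+1}=0$ on the $(k{+}1)$-dimensional space $V$'' is equivalent to simply working from the outset with the $k$-dimensional span of $\phi_{1},\ldots,\phi_{k}$ and then observing that any element of it vanishes on $\Omega_{k+1}$; the dimension count ($k$ unknowns, $k{-}1$ linear constraints) is then cleaner. The passage from ``$\phi$ attains the infimum of the Rayleigh quotient on $\{f_{1},\ldots,f_{k-1}\}^{\perp}$'' to ``$\phi$ is an eigenfunction for $\lambda_{k}$'' is the standard expansion $\phi=\sum_{i\geq k}a_{i}f_{i}$, from which $R(\phi)=\lambda_{k}$ forces $a_{i}=0$ whenever $\lambda_{i}>\lambda_{k}$; you may want to make that half-line explicit. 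The unique-continuation step (Aronszajn for $\Delta+\langle\nabla\varphi,\nabla\cdot\rangle$ with smooth $\varphi$) is the right tool to close the contradiction.
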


Second, we characterize variationally the first eigenvalue  $\lambda_{1}(\mathcal{L}^{\varphi})$ for the Jacobi operator $\mathcal{L}^{\varphi}$ on the space of functions $\mathcal{H}^{1,\varphi}_{0}(\Sigma)$.

\begin{theorem} Let $\Sigma$ be a properly embedded $[\varphi,\vec{e}_{3}]$-minimal surface in $\mathbb{R}^3$ with bounded $\vert S\vert^2$ and function $\varphi$ satisfying \eqref{segcond}. Then,
\begin{equation}
\label{deffl1}
\lambda_{1}(\mathcal{L}^{\varphi})=\text{inf}_{\mathcal{H}^{1,\varphi}_{0}(\Sigma)}\bigg\{\int_{\Sigma}e^{\varphi}\left(\vert\nabla u\vert^2-qu^2 \right)\, d\Sigma\bigg\}=\text{inf}_{\mathcal{H}^{1,\varphi}_{0}(\Sigma)}\mathcal{Q}(u,u).
\end{equation}
and there exists $f_{1}\in\mathcal{H}^{1,\varphi}_{0}(\Sigma)$ such that $\mathcal{Q}(f_{1},f_{1})=\lambda_{1}(\mathcal{L}^{\varphi})$. Moreover, all eigenfunctions associated to $\lambda_{1}(\Sigma)$ can be characterized variationally by
$$\mathcal{Q}(u,u)\geq \lambda_{1}(\mathcal{L}^{\varphi}) \ \ \text{for any} \ \ u\in\mathcal{H}^{1,\varphi}_{0}(\Sigma)$$
with equality if and only if $u$ is eigenfunction of $\lambda_{1}(\mathcal{L}^{\varphi})$.
\end{theorem}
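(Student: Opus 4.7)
The plan is to apply the direct method of the calculus of variations to the functional $u \mapsto \mathcal{Q}(u,u)$ restricted to $\mathcal{H}^{1,\varphi}_{0}(\Sigma)$, derive the Euler--Lagrange equation at the resulting minimizer via the shift trick of Corollary \ref{equtest}, and match the identified eigenvalue with the Rayleigh characterization \eqref{defl1}.

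First, observe that the potential $q = |\mathcal{S}|^{2}-\ddot\varphi\,\eta^{2}$ is uniformly bounded on $\Sigma$: $|\mathcal{S}|^{2}$ by hypothesis \eqref{S}, and $\ddot\varphi$ because $\varphi$ is smooth with compact support containing $]a,b[$. Hence for every $u \in \mathcal{H}^{1,\varphi}_{0}(\Sigma)$ the constraint $\int_{\Sigma}e^{\varphi}u^{2}\,d\Sigma=1$ yields $\mathcal{Q}(u,u)\geq -\sup_{\Sigma}|q|$, so $\mu := \inf_{\mathcal{H}^{1,\varphi}_{0}(\Sigma)}\mathcal{Q}(u,u)$ is finite. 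Taking a minimizing sequence $\{u_{n}\}$, the uniform bound on $\mathcal{Q}(u_{n},u_{n})$ together with the constraint forces $\int_{\Sigma}e^{\varphi}|\nabla u_{n}|^{2}\,d\Sigma$ uniformly bounded, and Lemma \ref{propH2} then provides a uniform $\mathcal{H}^{1,\varphi}_{loc}(\Sigma)$-bound. Passing to a subsequence, $u_{n}\rightharpoonup f_{1}$ weakly in $\mathcal{H}^{1}_{loc}(\Sigma)$ and strongly in $L^{2}_{loc}$.

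The main obstacle is ensuring that $f_{1}$ still belongs to $\mathcal{H}^{1,\varphi}_{0}(\Sigma)$: on the non-compact surface $\Sigma$ the two linear constraints need not survive weak $\mathcal{H}^{1}_{loc}$-convergence, since weighted mass could in principle escape to infinity. I handle this exactly as in Proposition \ref{existencemin}: the cutoff functions $\eta_{m}$ of Section 2, together with the inequalities \eqref{flimit}--\eqref{slimit} and the weighted decay provided by Lemma \ref{propH2}, allow both constraints to pass to the weak limit, so $f_{1}\in\mathcal{H}^{1,\varphi}_{0}(\Sigma)$. Weak lower-semicontinuity of the weighted Dirichlet term and strong $L^{2}_{e^{\varphi}}$-convergence of the zero-order part then give $\mathcal{Q}(f_{1},f_{1})\leq\liminf_{n}\mathcal{Q}(u_{n},u_{n})=\mu$, so $f_{1}$ attains the infimum.

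For the Euler--Lagrange step, given any $\phi\in\mathcal{H}^{1,\varphi}_{c}(\Sigma)$ I apply the constant-shift trick of Corollary \ref{equtest} to produce admissible one-parameter variations preserving both linear constraints; a Lagrange-multiplier computation handling the $L^{2}_{e^{\varphi}}$-normalization then yields
\[
\mathcal{Q}(f_{1},\phi) \,=\, \mu \int_{\Sigma}e^{\varphi}f_{1}\phi\,d\Sigma, \qquad \phi\in\mathcal{H}^{1,\varphi}_{c}(\Sigma),
\]
which extends to all $\phi\in\mathcal{H}^{1,\varphi}_{0}(\Sigma)$ by density. Thus $f_{1}$ is a weak solution of $\mathcal{L}^{\varphi}f_{1}=-\mu f_{1}$, and the elliptic bootstrap of Corollary \ref{smooth} yields $f_{1}\in C^{\infty}(\Sigma)$. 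Since on $\mathcal{H}^{1,\varphi}_{0}(\Sigma)$ the Rayleigh quotient in \eqref{defl1} reduces to $\mathcal{Q}(u,u)$, and the Courant-type spectral theorem cited above ensures a discrete spectrum, we conclude $\mu=\lambda_{1}(\mathcal{L}^{\varphi})$. Finally, if $u$ is a first eigenfunction then testing $\mathcal{L}^{\varphi}u=-\lambda_{1}(\mathcal{L}^{\varphi})\,u$ against $e^{\varphi}u$ gives $\mathcal{Q}(u,u)=\lambda_{1}(\mathcal{L}^{\varphi})$; conversely, any $u\in\mathcal{H}^{1,\varphi}_{0}(\Sigma)$ saturating $\mathcal{Q}(u,u)\geq\lambda_{1}(\mathcal{L}^{\varphi})$ is itself a minimizer, so the Euler--Lagrange step applied to $u$ in place of $f_{1}$ exhibits it as an eigenfunction.
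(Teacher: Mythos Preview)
Your proposal is correct and follows essentially the same route as the paper: bound $q$ to get a finite infimum, run the direct method on a minimizing sequence, use the cutoff functions $\eta_{m}$ (as in Proposition~\ref{existencemin}) together with Lemma~\ref{propH2} to keep the limit in $\mathcal{H}^{1,\varphi}_{0}(\Sigma)$, and conclude by lower-semicontinuity. The only cosmetic difference is in the last step: the paper derives the weak eigenvalue equation directly from the quadratic polynomial $h(t)=\mathcal{Q}(u+t\phi,u+t\phi)-\lambda_{1}\int_{\Sigma}e^{\varphi}(u+t\phi)^{2}\,d\Sigma$ having a minimum at $t=0$, whereas you invoke the constant-shift trick of Corollary~\ref{equtest} and then appeal to Corollary~\ref{smooth} for regularity---both arguments amount to the same first-variation computation.
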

\begin{proof}
Firstly, the existence of this infimum is guaranteed since $q\in\mathcal{L}^{\infty}(\Sigma)$ and 
$$\mathcal{Q}(u,u)\geq-\int_{\Sigma}e^{\varphi} qu^2\, d\Sigma\geq -\text{sup}_{\Sigma}q\int_{\Sigma}e^{\varphi}u^2\, d\Sigma\geq-\text{sup}_{\Sigma}q.$$
By definition, there exists a sequence $\{u_{n}\}_{n\in\mathbb{N}}$ of functions in $\mathcal{H}^{1,\varphi}_{0}(\Sigma)$ such that $$\mathcal{Q}(u_{n},u_{n})\rightarrow\lambda_{1}(\mathcal{L}^{\varphi}).$$
Moreover, the sequence $\{\mathcal{Q}(u_{n},u_{n})\}_{n\in\mathbb{N}}$ is upper bounded and thus,  $\{u_{n}\}_{n\in\mathbb{N}}$ is bounded in norm of $\mathcal{H}^{1,\varphi}_{loc}(\Sigma)$. Hence, up to subsequence, $\{u_{n}\}_{n\in\mathbb{N}}$ converges weakly to $f_{1}$ in $\mathcal{H}^{1,\varphi}_{loc}(\Sigma)$ and $\{u_{n}\}_{n\in\mathbb{N}}$ converges to $f_{1}$ in $L^{2,\varphi}(\Sigma)$. Then, $f_{1}\in\mathcal{H}^{1,\varphi}_{0}(\Sigma)$ since applying the H\"older inequality,
$$ \bigg\vert\int_{\Sigma}e^{\varphi}\eta_{m} u_{n}\, d\Sigma\bigg\vert\leq \bigg\vert\int_{\Sigma}e^{\varphi}(1-\eta_{m})u_{n}\, d\Sigma \bigg\vert\leq \left(\int_{\Sigma}e^{\varphi}(1-\eta_{m})^2\right)^{1/2}\left(\int_{\Sigma}e^{\varphi}u_{n}^2\right)^{1/2}$$
 for any $n\in\mathbb{N}$. Finally, from the lower-semicontinuity we obtain
$$\lambda_{1}(\mathcal{L}^{\varphi})\leq\mathcal{Q}(f_{1},f_{1})\leq \text{liminf}_{n\rightarrow+\infty}\mathcal{Q}(u_{n},u_{n})=\lambda_{1}(\mathcal{L}^{\varphi}).$$
Now, the eigenfunctions associated to $\lambda_{1}(\mathcal{L}^{\varphi})$ can be characterized variationally as follows: take $u\in\mathcal{H}^{1,\varphi}_{0}(\Sigma)$ such that $$\mathcal{Q}(u,\phi)=\lambda_{1}(\mathcal{L}^{\varphi})\int_{\Sigma}e^{\varphi}u\phi\, d\Sigma,$$ for any test function $\phi\in\mathcal{H}^{1,\varphi}_{0}(\Sigma)$ with compact support. Consider the following polynomial 
$$h(t)=\mathcal{Q}(u+t\phi,u+t\phi)-\lambda_{1}(\mathcal{L}^{\varphi})\int_{\Sigma}e^{\varphi}(u+t\phi)^2\, d\Sigma \ \ \text{for} \ \ t\in\mathbb{R}.$$
Notice that $h(t)\geq 0$ with $h(0)=0$. Thus, $h'(0)=0$ which implies that
$$\mathcal{Q}(u,\phi)=\lambda_{1}(\mathcal{L}^{\varphi})\int_{\Sigma}e^{\varphi}u\phi\, d\Sigma.$$
\end{proof}
\section{Estimation of the minimum $\mu$}
Take $(f,w)\in\mathcal{H}^{1,\varphi}_{0}(\Sigma)\times\mathcal{H}^{1,\varphi}_{loc}(\mathbb{R}^3)$ given by the Proposition \ref{existencemin} with fixed any $0<\alpha\leq 1$. Now, consider $\Omega, \widetilde{\Omega}$ be the two connected components of $\mathbb{R}^3\backslash\Sigma$ and the smooth function $u=w\vert_{\Omega}$ and $\widetilde{u}=w\vert_{\widetilde{\Omega}}$. Notice that, from Corollary \ref{smooth}, $w\in\mathcal{DN}^{1,\varphi}_{loc}$ and $f\in\mathcal{C}^{\infty}(\Sigma)$. Moreover, the functions $u=w\vert_{\Omega}$ and $\widetilde{u}=w\vert_{\widetilde{\Omega}}$ are smooth up to boundary of $\Omega$ and $\widetilde{\Omega}$, respectively. Thus, by a straighforward computation together with the well-known Bouchner's formula, we get that
\begin{equation}
\label{eqp1}
\frac{1}{2}\text{div}_{\Omega}\left(e^{\varphi}\overline{\nabla}\vert\overline{\nabla}u \vert^2 \right)=e^{\varphi}\left(\vert\overline{D}^2 u\vert^2-\overline{D}^2\varphi(\overline{\nabla} u,\overline{\nabla} u)\right),
\end{equation}
where $\overline{D}$ denotes the Euclidean Hessian in $\mathbb{R}^3$. Consequently, multiplying the equation \eqref{eqp1} by $\eta_{n}^2$ defined in \eqref{etan}, we obtain applying the Divergence Theorem that
\begin{align}
\frac{1}{2}\int_{\Sigma}e^{\varphi}\eta_{n}^2\langle\overline{\nabla}\vert\overline{\nabla} u \vert^2 , \nu \rangle\, d\Sigma=&\int_{\Omega}e^{\varphi}\eta_{n}^2\vert \overline{D}^2 u\vert^2\, dx-\int_{\Omega}e^{\varphi}\eta_{n}^2\overline{D}^2\varphi(\overline{\nabla} u,\overline{\nabla} u)\, dx \nonumber
 \\&+\int_{\Omega}e^{\varphi}\eta_{n}\langle\overline{\nabla}\eta_{n},\overline{\nabla}\vert\overline{\nabla} u\vert^2 \rangle\, dx. \label{eqp2}
\end{align}
Now, we compute the first term of \eqref{eqp2}. Firstly,
$$e^{\varphi}\langle\overline{\nabla}\vert\overline{\nabla} u\vert^2,\nu\rangle=e^{\varphi}\left(\overline{D}^2 u(\nabla f,\nu)+\overline{D}^2 u(\nu,\nu)\langle\overline{\nabla}u,\nu\rangle \right).$$
with  $\overline{D}^2 u(\nu,\nu)=\overline{\Delta}u-\Delta u-H\langle\overline{\nabla} u,\nu\rangle=-\Delta^{\varphi} f$. Moreover,
\begin{align*}
e^{\varphi}\overline{D}^2 u(\nabla f,\nu)=&e^{\varphi}\left(\langle \nabla f, \nabla \langle\overline{\nabla} u,\nu \rangle \rangle-\mathcal{S}(\nabla f,\nabla f)\right) \\
=& \text{div}_{\Sigma}(e^{\varphi}\langle\overline{\nabla} u,\nu \rangle\nabla f)-e^{\varphi}\langle\overline{\nabla} u,\nu\rangle\Delta^{\varphi} f-e^{\varphi}\mathcal{S}(\nabla f,\nabla f),
\end{align*}
where $\mathcal{S}$ stands the second fundamental form of $\Sigma$ with respect to $\nu$. Thus,
\begin{equation}
\label{eqp3}
e^{\varphi}\langle\overline{\nabla}\vert\overline{\nabla} u\vert^2,\nu\rangle=-2e^{\varphi}\langle\overline{\nabla} u,\nu\rangle\Delta^{\varphi} f+\text{div}_{\Sigma}\left(e^{\varphi}\langle\overline{\nabla} u,\nu \rangle\nabla f \right)-e^{\varphi}\mathcal{S}(\nabla f,\nabla f).
\end{equation}
Multiplying the previous equation \eqref{eqp3} by $\eta_{n}^2$ and applying the Divergence Theorem, we obtain that
\begin{align}
\frac{1}{2}\int_{\Sigma}e^{\varphi}\eta_{n}^2\langle\overline{\nabla}\vert\overline{\nabla} u\vert^2,\nu \rangle\, d\Sigma =& -\int_{\Sigma}e^{\varphi}\eta_{n}\langle\overline{\nabla} u,\nu \rangle \langle \nabla f, \nabla \eta_{n} \rangle\, d\Sigma-\int_{\Sigma}e^{\varphi}\eta_{n}^2\langle\overline{\nabla}u,\nu\rangle\Delta^{\varphi}f\, d\Sigma \nonumber \\
&-\frac{1}{2}\int_{\Sigma}e^{\varphi}\eta_{n}^2\mathcal{S}(\nabla f,\nabla f)\, d\Sigma. \label{eqp4}
\end{align}
Plug-in the equation \eqref{eqp4} into \eqref{eqp2}, the following equation holds
\begin{align}
&-\int_{\Sigma}e^{\varphi}\eta_{n}\langle \overline{\nabla} u,\nu\rangle\langle\nabla f,\nabla \eta_{n} \rangle\, d\Sigma-\int_{\Sigma}e^{\varphi}\eta_{n}^2\langle\overline{\nabla}u,\nu\rangle\Delta^{\varphi}f\, d\Sigma-\frac{1}{2}\int_{\Sigma}e^{\varphi}\eta_{n}^2\mathcal{S}(\nabla f,\nabla f)\, d\Sigma \label{eqp5} \\
&=\int_{\Omega}e^{\varphi}\eta_{n}^2\vert \overline{D}^2 u\vert^2\, dx-\int_{\Omega}e^{\varphi}\eta_{n}^2\overline{D}^2\varphi(\overline{\nabla} u,\overline{\nabla} u)\, dx+\int_{\Omega}e^{\varphi}\eta_{n}\langle\overline{\nabla}\eta_{n},\overline{\nabla}\vert\overline{\nabla} u\vert^2 \rangle\, dx.  \nonumber
\end{align}
As the same as before, we also obtain the following formula
\begin{align}
&-\int_{\Sigma}e^{\varphi}\eta_{n}\langle \overline{\nabla} \widetilde{u},\widetilde{\nu}\rangle\langle\nabla f,\nabla \eta_{n} \rangle-\int_{\Sigma}e^{\varphi}\eta_{n}^2\langle\overline{\nabla}\widetilde{u},\widetilde{\nu}\rangle\Delta^{\varphi}f-\frac{1}{2}\int_{\Sigma}e^{\varphi}\eta_{n}^2\widetilde{\mathcal{S}}(\nabla f,\nabla f) \label{eqp6} \\
&=\int_{\widetilde{\Omega}}e^{\varphi}\eta_{n}^2\vert \overline{D}^2 \widetilde{u}\vert^2-\int_{\widetilde{\Omega}}e^{\varphi}\eta_{n}^2\overline{D}^2\varphi(\overline{\nabla} \widetilde{u},\overline{\nabla} \widetilde{u})+\int_{\widetilde{\Omega}}e^{\varphi}\eta_{n}\langle\overline{\nabla}\eta_{n},\overline{\nabla}\vert\overline{\nabla} \widetilde{u}\vert^2 \rangle, \nonumber
\end{align}
where  $\widetilde{\mathcal{S}}$ is the second fundamental form of $\Sigma$ with respect to $\widetilde{\nu}$ given by $\widetilde{\mathcal{S}}=-\mathcal{S}$.
Next, from \eqref{DNproblem}, if we  add the equations \eqref{eqp5} and \eqref{eqp6}, then
\begin{align}
&-\int_{\Sigma}e^{\varphi}\eta_{n}\langle\nabla f,\nabla \eta_{n}\rangle\left(\langle\overline{\nabla} u,\nu\rangle+\langle\overline{\nabla}\widetilde{u},\widetilde{\nu} \rangle\right)\, d\Sigma+\mu\int_{\Sigma}e^{\varphi}\eta_{n}^2 f\left(\langle\overline{\nabla} u,\nu\rangle+\langle\overline{\nabla}\widetilde{u},\widetilde{\nu}\rangle \right)\, d\Sigma \label{eqp7} \\
&-\alpha\int_{\Sigma}e^{\varphi}\eta_{n}^2\left(\langle\overline{\nabla} u,\nu\rangle+\langle\overline{\nabla}\widetilde{u},\widetilde{\nu}\rangle \right)^2\, d\Sigma =\int_{\Omega}e^{\varphi}\eta_{n}^2\vert\overline{D}^2 u \vert^2\, dx+\int_{\widetilde{\Omega}}e^{\varphi}\eta_{n}^2\vert\overline{D}^2 \widetilde{u} \vert^2\, dx \nonumber \\
&-\int_{\Omega}e^{\varphi}\eta_{n}^2\overline{D}^2\varphi(\overline{\nabla} u,\overline{\nabla} u)\, dx -\int_{\widetilde{\Omega}}e^{\varphi}\eta_{n}^2\overline{D}^2\varphi(\overline{\nabla} \widetilde{u},\overline{\nabla} \widetilde{u})\, dx+\int_{\Omega}e^{\varphi}\eta_{n}\langle \overline{\nabla}\eta_{n},\overline{\nabla}\vert\overline{\nabla} u\vert^2\rangle\, dx \nonumber \\
&\hspace{ 3.7 cm} +\int_{\widetilde{\Omega}}e^{\varphi}\eta_{n}\langle \overline{\nabla}\eta_{n},\overline{\nabla}\vert\overline{\nabla} \widetilde{u}\vert^2\rangle\, dx. \nonumber
\end{align}
Since,
\begin{align*}
&\text{div}_{\Omega}\left(e^{\varphi}\eta_{n}^2 u \overline{\nabla} u \right)=e^{\varphi}\eta_{n}^2 \vert\overline{\nabla} u\vert^2+2e^{\varphi}\eta_{n} u\langle\overline{\nabla} u,\overline{\nabla}\eta_{n}\rangle \\
&\text{div}_{\widetilde{\Omega}}\left(e^{\varphi}\eta_{n}^2 \widetilde{u} \overline{\nabla} \widetilde{u} \right)=e^{\varphi}\eta_{n}^2 \vert\overline{\nabla} \widetilde{u}\vert^2+2e^{\varphi}\eta_{n} \widetilde{u}\langle\overline{\nabla} \widetilde{u},\overline{\nabla}\eta_{n}\rangle
\end{align*}
we prove the following formula
\begin{align}
&\mu\int_{\Omega}e^{\varphi}\eta_{n}^2\vert\overline{\nabla} u\vert^2\, dx+2\int_{\Omega}e^{\varphi}\eta_{n} u \langle\overline{\nabla} u,\overline{\nabla}\eta_{n} \rangle\, dx+\mu\int_{\widetilde{\Omega}}e^{\varphi}\eta_{n}^2\vert\overline{\nabla} \widetilde{u}\vert^2\, dx+2\int_{\widetilde{\Omega}}e^{\varphi}\eta_{n} \widetilde{u} \langle\overline{\nabla} \widetilde{u},\overline{\nabla}\eta_{n} \rangle\, dx \label{eqp8} \\
&-\int_{\Sigma}e^{\varphi}\eta_{n}\langle\nabla f,\nabla \eta_{n}\rangle\left(\langle\overline{\nabla} u,\nu\rangle+\langle\overline{\nabla}\widetilde{u},\widetilde{\nu} \rangle\right)\, d\Sigma-\alpha\int_{\Sigma}e^{\varphi}\eta_{n}^2\left(\langle\overline{\nabla} u,\nu\rangle+\langle\overline{\nabla}\widetilde{u},\widetilde{\nu}\rangle \right)^2\, d\Sigma \nonumber \\
&=\int_{\Omega}e^{\varphi}\eta_{n}^2\vert\overline{D}^2 u \vert^2\, dx+\int_{\widetilde{\Omega}}e^{\varphi}\eta_{n}^2\vert\overline{D}^2 \widetilde{u} \vert^2\, dx-\int_{\Omega}e^{\varphi}\eta_{n}^2\overline{D}^2\varphi(\overline{\nabla} u,\overline{\nabla} u)\, dx -\int_{\widetilde{\Omega}}e^{\varphi}\eta_{n}^2\overline{D}^2\varphi(\overline{\nabla} \widetilde{u},\overline{\nabla} \widetilde{u})\, dx \nonumber \\
&\hspace{ 2.5 cm} +\int_{\Omega}e^{\varphi}\eta_{n}\langle \overline{\nabla}\eta_{n},\overline{\nabla}\vert\overline{\nabla} u\vert^2\rangle\, dx+\int_{\widetilde{\Omega}}e^{\varphi}\eta_{n}\langle \overline{\nabla}\eta_{n},\overline{\nabla}\vert\overline{\nabla} \widetilde{u}\vert^2\rangle\, dx. \nonumber
\end{align}
Furthermore, notice that 
\begin{align*}
&\eta_{n}^2\vert \overline{D}^2 u\vert^2+\eta_{n}\langle\overline{\nabla}\eta_{n},\overline{\nabla}\vert\overline{\nabla} u\vert^2\rangle\geq-\vert \overline{\nabla}\eta_{n}\vert^2\vert\overline{\nabla} u\vert^2  \text{ on } \Omega, \\
&\eta_{n}^2\vert \overline{D}^2 \widetilde{u}\vert^2+\eta_{n}\langle\overline{\nabla}\eta_{n},\overline{\nabla}\vert\overline{\nabla} \widetilde{u}\vert^2\rangle\geq-\vert \overline{\nabla}\eta_{n}\vert^2\vert\overline{\nabla} \widetilde{u}\vert^2 \text{ on } \widetilde{\Omega} \\
&\eta_{n}\langle \nabla f,\nabla \eta_{n}\rangle \left(\langle\overline{\nabla} u,\nu \rangle+\langle\overline{\nabla} \widetilde{u},\widetilde{\nu} \rangle \right)\geq -\vert\nabla f\vert \vert\nabla \eta_{n}\vert \vert\vert\overline{\nabla} u\vert+\vert\overline{\nabla}\widetilde{u} \vert \vert \text{ on } \Sigma.
\end{align*}
Since $f\in\mathcal{H}^{1,\varphi}_{0}(\Sigma)$ and $w\in\mathcal{DN}^{1,\varphi}_{loc}(\mathbb{R}^3)$, then
$$\int_{\Sigma}e^{\varphi}\vert\nabla f\vert^2\, d\Sigma<\infty \ \ \text{and} \ \ \int_{\mathbb{R}^3}e^{\varphi}(w^2+\vert\overline{\nabla}w\vert^2)\, dx<\infty.$$
As a consequence, we can take limit in the expresion \eqref{eqp8} to obtain the following inequality
\begin{align}
\mu\int_{\Omega}e^{\varphi}\vert\overline{\nabla} u\vert^2\, dx+&\int_{\Omega}e^{\varphi}\overline{D}^2\varphi(\overline{\nabla} u,\overline{\nabla} u)\, dx \nonumber \\
+&\mu\int_{\widetilde{\Omega}}e^{\varphi}\vert\overline{\nabla} \widetilde{u}\vert^2\, dx+\int_{\widetilde{\Omega}}e^{\varphi}\overline{D}^2\varphi(\overline{\nabla} \widetilde{u},\overline{\nabla}\widetilde{u})\, dx\geq 0 \label{eqfinal}.
\end{align}
In particular, from \eqref{eqfinal} together with \eqref{DNproblem} and \eqref{segcond}, we have
\begin{align}
\mu\int_{\Omega}e^{\varphi}\vert\overline{\nabla} u\vert^2\, dx-C_{\varphi}(\Sigma)\int_{\Omega}\overline{\Delta}^2\, u\, dx +\mu\int_{\widetilde{\Omega}}e^{\varphi}\vert\overline{\nabla} \widetilde{u}\vert^2\, dx-C_{\varphi}(\Sigma)\int_{\widetilde{\Omega}}\overline{\Delta}^2\, \widetilde{u}\, dx\geq 0 . \label{eqfinal2}
\end{align}
Finally, from the Poincare's inequality , there exists a constat $C_\mathcal{P}(\Sigma)=\text{min}\{ C_{\mathcal{P}}(\Omega), C_{\mathcal{P}}(\widetilde{\Omega})\}$ such that
$$\mu\geq C_{\mathcal{P}}(\Sigma) C_{\varphi}(\Sigma),$$
 where $ C_{\mathcal{P}}(\Omega), C_{\mathcal{P}}(\widetilde{\Omega})$ are the Poincare's constants of $\Omega$ and $\widetilde{\Omega}$, respectively.
\begin{proposition}
For any $f\in\mathcal{H}^{1,\varphi}_{0}(\Sigma)$ such that there exists $w\in\mathcal{H}^{1,\varphi}(\mathbb{R}^3)$ with $w\vert_{\Sigma}=f$, then
$$\int_{\Sigma}\, e^{\varphi} \vert\nabla f\vert^2\, d\Sigma\geq C_{\mathcal{P}}(\Sigma)C_{\varphi}(\Sigma).$$
\end{proposition}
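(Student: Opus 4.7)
The plan is to combine the existence and lower-bound statements already obtained for the minimum $\mu$ of the mixed Dirichlet--Neumann functional with a limit argument as the coupling parameter $\alpha$ tends to zero. Let me denote by $\mu_{\alpha}$ the infimum defined in the Definition of Section 3, to emphasize its dependence on $\alpha \in \,]0,1]$. By Proposition \ref{existencemin}, this infimum is attained by some minimizer $(f_{\alpha}, w_{\alpha}) \in \mathcal{H}^{1,\varphi}_0(\Sigma) \times \mathcal{H}^{1,\varphi}_{loc}(\mathbb{R}^3)$, and the Bochner-type computation carried out in Section 5 yields the key estimate
$$\mu_{\alpha} \geq C_{\mathcal{P}}(\Sigma)\, C_{\varphi}(\Sigma).$$
The crucial observation is that the right-hand side of this lower bound does not involve $\alpha$, since the Poincar\'e inequality applied to \eqref{eqfinal2} provides a constant depending only on the geometry of the two connected components $\Omega, \widetilde{\Omega}$ and on $\varphi$.

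Now fix any $f \in \mathcal{H}^{1,\varphi}_{0}(\Sigma)$ that admits an extension $w \in \mathcal{H}^{1,\varphi}(\mathbb{R}^3)$ with $w|_\Sigma = f$. Since the pair $(f,w)$ is admissible in the definition of $\mu_{\alpha}$, we have the obvious upper bound
$$\mu_{\alpha} \leq \int_{\Sigma} e^{\varphi} |\nabla f|^2\, d\Sigma + \alpha \int_{\mathbb{R}^3} e^{\varphi} |\overline{\nabla} w|^2\, dx.$$
Combining this with the uniform lower bound from the previous paragraph, we obtain, for every $\alpha \in \,]0,1]$,
$$C_{\mathcal{P}}(\Sigma)\, C_{\varphi}(\Sigma) \leq \int_{\Sigma} e^{\varphi} |\nabla f|^2\, d\Sigma + \alpha \int_{\mathbb{R}^3} e^{\varphi} |\overline{\nabla} w|^2\, dx.$$
Since $w \in \mathcal{H}^{1,\varphi}(\mathbb{R}^3)$, the bulk integral $\int_{\mathbb{R}^3} e^{\varphi} |\overline{\nabla} w|^2\, dx$ is finite, so letting $\alpha \to 0^+$ yields the desired inequality
$$\int_{\Sigma} e^{\varphi} |\nabla f|^2\, d\Sigma \geq C_{\mathcal{P}}(\Sigma)\, C_{\varphi}(\Sigma).$$

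The main conceptual obstacle in this plan is verifying that the lower bound $\mu_{\alpha} \geq C_{\mathcal{P}}(\Sigma)\, C_{\varphi}(\Sigma)$ is genuinely uniform in $\alpha$, which is needed for the passage to the limit. A careful reading of the Bochner integration in Section 5 shows that the cancellation provided by the Dirichlet--Neumann coupling condition \eqref{DNproblem} and the use of the concavity condition \eqref{segcond} lead to \eqref{eqfinal2}, from which the Poincar\'e inequalities on $\Omega$ and $\widetilde{\Omega}$ absorb the Laplacian terms without reintroducing $\alpha$; all $\alpha$-dependence sits on the boundary terms that vanish or recombine thanks to the minimizer's transmission condition. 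Hence the lower bound is indeed $\alpha$-free, and the density/approximation arguments developed in Section 2 ensure that the cutoff truncations used to justify the integration by parts behave well when $w$ is only assumed to lie in $\mathcal{H}^{1,\varphi}(\mathbb{R}^3)$ rather than being a minimizer.
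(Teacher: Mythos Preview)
Your proposal is correct and matches the paper's (largely implicit) approach: the paper states the Proposition immediately after deriving $\mu\geq C_{\mathcal{P}}(\Sigma)C_{\varphi}(\Sigma)$ for any fixed $0<\alpha\leq 1$, and the passage you spell out---using that this lower bound is $\alpha$-free and sending $\alpha\to 0^{+}$ in the admissibility inequality---is exactly the missing link. Your check that $\alpha$ drops out in \eqref{eqfinal}--\eqref{eqfinal2} is the right justification.
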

We are ready to prove our main result. 
\subsection{Proof of Theorems A and B}
\begin{proof}[Proof of Theorem B]
Consider $\eta_{n}$ defined in \eqref{etan}. We can take the following sequence of functions $w_{n}$ with compact support in $\Sigma\cap B(0,r_{n})$ for some divergent  $r_{n}>0$  and such that $w_{n}\vert_{\Sigma}=\eta_{n}f=f_{n}$. Define,
$$a_{r_n}=-\dfrac{\int_{\Sigma\cap B(0,r_{n})}e^{\varphi}f_{n}\, d\Sigma}{\int_{\Sigma\cap B(0,r_{n})}e^{\varphi}\eta_{n}\, d\Sigma}.$$
Notice that, from Proposition \ref{propH2}, $a_{r_{n}}\rightarrow 0$ since $f\in\mathcal{H}^{1,\varphi}_{0}(\Sigma)$. Next, define $\widetilde{f}_{n}=f_{n}+a_{n}\eta_{n}$ and $\widetilde{w}_{n}=w_{n}+a_{n}\eta_{n}$. It is clear that $\widetilde{w}_{n}\vert_{\Sigma}=\widetilde{f}_{n}$ and by straighforward computation, we get that
$$0=\int_{\Sigma\cap B(0,r_{n})}e^{\varphi}\widetilde{f}_{n}\, d\Sigma=\int_{\Sigma}e^{\varphi}\widetilde{f}_{n}\chi_{r_{n}}\, d\Sigma=\int_{\Sigma}\, e^{\varphi} f\, d\Sigma. $$
for any $r_{n}>0$ and  where $\chi_{r_{n}}$ is the characteristic function on $\Sigma\cap B(0,r_{n})$. In particular, up to constant, we get that $\widetilde{f}_{n}\chi_{r_{n}}\in\mathcal{H}^{1,\varphi}_{0}(\Sigma)$. Consequently,
$$\int_{\Sigma}e^{\varphi}\vert\nabla\widetilde{f}_{n}\chi_{r_{n}}\vert^2\, d\Sigma\geq\mu\int_{\Sigma}e^{\varphi}(f_{n}+a_{n}\eta_{n})^2\chi_{r_{n}}\, d\Sigma.$$
for each $n\in\mathbb{N}$.  Thus, the proof follows taking limit as $n\rightarrow+\infty$ since $f\in\mathcal{H}^{1,\varphi}_{0}(\Sigma)$ and $\eta_{n}\rightarrow 1$ uniformly in norm $\mathcal{H}^{1,\varphi}.$
\end{proof}
\begin{proof}[Proof of Theorem A]
A nice consequence of this result gives us an estimate of $\lambda_{1}(\Delta^{\varphi})$ when we pose the Dirichlet problem for functions $w$ in the Hilbert $\mathcal{H}^{1,\varphi}_{c} (K)$ of compact support in $K$( a domain of $\mathbb{R}^3$ with $K\cap\Sigma\neq\emptyset$ an open connected subset of $\Sigma$).  Thus, $f$ has compact support in $\Sigma\cap K$. Hence, from the lower bound obtained by P. Li, S.T. Yau \cite{LY} and taking $\lambda=\text{min}\{ \lambda_{1}(\Omega\cap K),\lambda_{1}(\widetilde{\Omega}\cap K)\}$, where $\lambda_{1}$ denotes the first eigenvalue of the usual Laplacian $\Delta$, we get the following Faber-Krahn type inequality
$$\int_{\Sigma}e^{\varphi}\, \vert\nabla f\vert^2\, d\Sigma\geq \frac{3}{5}C_{\varphi}(\Sigma)\left(\frac{1}{\text{Vol}(K) }\right)^{2/3}$$
where $\text{Vol}(K)$ denotes the volume of $K$ in $\mathbb{R}^3$.
\end{proof}

\section{Some topological consequences}
The proof of Theorem C is a direct consequence of the following results.
\\

The first one is an application of the well-known result of Fisher-Colbrie on stability as local minimum of the area function $\mathcal{A}^{\varphi}$. Precisely,
\begin{corollary}
\label{Cor1}
Let $\Sigma$ be a properly embedded $[\varphi,\vec{e}_{3}]$-minimal surface in $\mathbb{R}^3$ with asymptotic flat ends, norm of the second fundamental form bounded and weight function $\varphi$ verifying \eqref{segcond} with almost quadratic growth. If
\begin{equation}
\label{controlK}
2\int_{\Sigma}e^{\varphi}K\, f^2\, d\Sigma-C_{\mathcal{P}}(\Sigma)C_{\varphi}(\Sigma)\geq\text{sup}_{\Sigma}H^2-\text{inf}_{\Sigma}\ddot{\varphi},
\end{equation} 
for any $f\in\mathcal{H}^{1,\varphi}_{0}(\Sigma)$, then $\Sigma$ is stable.
\end{corollary}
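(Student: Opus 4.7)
The plan is to reduce stability to the non-negativity of the bilinear form $\mathcal{Q}$ of \eqref{defQ} and then to split $\mathcal{Q}(f,f)$ into a Dirichlet part controlled from below by Theorem B and a potential part controlled from above via $\sup_{\Sigma}H^2$, $\inf_{\Sigma}\ddot{\varphi}$ and the weighted Gauss curvature integral appearing in the hypothesis. By the variational characterization \eqref{deffl1}, stability is equivalent to $\lambda_{1}(\mathcal{L}^{\varphi})\geq 0$, that is, $\mathcal{Q}(f,f)\geq 0$ for every $f\in\mathcal{H}^{1,\varphi}_{0}(\Sigma)$, and this is what I would verify.

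First I would invoke the Gauss equation for a surface in $\mathbb{R}^3$, $\vert\mathcal{S}\vert^2=H^2-2K$, to rewrite the potential as $q=H^2-2K-\ddot{\varphi}\eta^2$, so that
$$\mathcal{Q}(f,f)=\int_{\Sigma}e^{\varphi}\vert\nabla f\vert^2\, d\Sigma-\int_{\Sigma}e^{\varphi}\left(H^2-2K-\ddot{\varphi}\eta^2\right)f^2\, d\Sigma.$$
Since $f\in\mathcal{H}^{1,\varphi}_{0}(\Sigma)$ satisfies by definition the three normalizations $\int_{\Sigma}e^{\varphi}f^2\, d\Sigma=1$, $\int_{\Sigma}e^{\varphi}f\, d\Sigma=0$ and $\int_{\Sigma}e^{\varphi}\vert\nabla f\vert^2\, d\Sigma<+\infty$ required by Theorem B, the gradient term is bounded below by $C_{\mathcal{P}}(\Sigma)\, C_{\varphi}(\Sigma)$. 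The $H^2$-piece is directly estimated by $\int_{\Sigma}e^{\varphi}H^2 f^2\, d\Sigma\leq\sup_{\Sigma}H^2$ using the normalization $\int_{\Sigma}e^{\varphi}f^2\, d\Sigma=1$.

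The most delicate piece is $-\int_{\Sigma}e^{\varphi}\ddot{\varphi}\eta^2 f^2\, d\Sigma$: the second condition in \eqref{segcond} forces $\ddot{\varphi}\leq-C_{\varphi}(\Sigma)\dot{\varphi}^2\leq 0$, and combined with $\eta^2\leq 1$ this yields the pointwise chain $\ddot{\varphi}\eta^2\geq\ddot{\varphi}\geq\inf_{\Sigma}\ddot{\varphi}$, whence $-\int_{\Sigma}e^{\varphi}\ddot{\varphi}\eta^2 f^2\, d\Sigma\leq-\inf_{\Sigma}\ddot{\varphi}$. Collecting these estimates I obtain
$$\mathcal{Q}(f,f)\geq C_{\mathcal{P}}(\Sigma)\, C_{\varphi}(\Sigma)+2\int_{\Sigma}e^{\varphi}K f^2\, d\Sigma-\sup_{\Sigma}H^2+\inf_{\Sigma}\ddot{\varphi}.$$
The corollary's hypothesis, once rearranged, reads $2\int_{\Sigma}e^{\varphi}Kf^2\, d\Sigma-\sup_{\Sigma}H^2+\inf_{\Sigma}\ddot{\varphi}\geq C_{\mathcal{P}}(\Sigma)\, C_{\varphi}(\Sigma)$, which substituted into the above produces $\mathcal{Q}(f,f)\geq 2\, C_{\mathcal{P}}(\Sigma)\, C_{\varphi}(\Sigma)\geq 0$, proving the stability of $\Sigma$.

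The main obstacle is the sign analysis of the $\ddot{\varphi}\eta^2$ term: without the bound $\ddot{\varphi}\leq 0$ inherited from \eqref{segcond}, the inequality $\ddot{\varphi}\eta^2\geq\inf_{\Sigma}\ddot{\varphi}$ reverses and one cannot trade the unit bound $\eta^2\leq 1$ for the global infimum. A secondary technical point is that applying Theorem B demands that $f$ satisfy both the $L^{2,\varphi}$-normalization and the zero-mean condition, which is precisely the definition of $\mathcal{H}^{1,\varphi}_{0}(\Sigma)$, so no extra approximation argument is needed beyond what is already built into the variational characterization \eqref{deffl1}.
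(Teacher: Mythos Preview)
Your proof is correct and follows essentially the same route as the paper: bound the Dirichlet energy from below via Theorem~B, expand $q=\vert\mathcal{S}\vert^2-\ddot{\varphi}\eta^2$ using the Gauss formula $\vert\mathcal{S}\vert^2=H^2-2K$, and then invoke the hypothesis \eqref{controlK} to conclude $\mathcal{Q}(f,f)\geq 0$. Your version is in fact more explicit than the paper's on the pointwise estimates $H^2f^2\leq(\sup_{\Sigma}H^2)f^2$ and $\ddot{\varphi}\eta^2\geq\inf_{\Sigma}\ddot{\varphi}$ (the latter relying on $\ddot{\varphi}\leq 0$ from \eqref{segcond}), which the paper leaves to the reader.
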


\begin{proof}
From the definition of the first eigenvalue $\lambda_{1}(\mathcal{L}^\varphi)$ together with Theorem \eqref{mainresult}, we get that
$$\lambda_{1}(\mathcal{L}^{\varphi})\geq C_{\mathcal{P}}(\Sigma)C_{\varphi}(\Sigma)-\int_{\Sigma}\, e^{\varphi}(\vert\mathcal{S}\vert^2-\ddot{\varphi}\eta^2)f^2\, d\Sigma.$$
for any $f\in\mathcal{H}^{1,\varphi}_{0}$. Now, taking into account the Gauss's formula $\vert\mathcal{S}\vert^2=H^2-2K$  we get that
$$\lambda_{1}(\mathcal{L}^\varphi)\geq C_{\mathcal{P}}(\Sigma)C_{\varphi}(\Sigma)-\int_{\Sigma}\, e^{\varphi}H^2\, f^2\, d\Sigma+\int_{\Sigma}e^{\varphi}\ddot{\varphi}\eta^2\, f^2\, d\Sigma+2\int_{\Sigma}\, e^{\varphi} K\, f^2\, d\Sigma.$$
and the proof follows from the condition \eqref{controlK}.
\end{proof}

On the other hand, if $\Sigma$ has finite topoloty and finite total curvature, we can apply the Cohn'Vossen's inequality to obtain,
\begin{corollary}
\label{Cor2}
Let $\Sigma$ be a properly embedded $[\varphi,\vec{e}_{3}]$-minimal surface in $\mathbb{R}^3$ with asymptotic flat ends, norm of the second fundamental form bounded and function $\varphi$ verifying \eqref{segcond} with almost quadratic growth. If $\Sigma$ has finite total Gauss curvature , finite topology and for any $f\in\mathcal{H}^{1,\varphi}_{0}(\Sigma)$ we get that
$$\int_{\Sigma}(1+2e^{\varphi}f^2)K\, d\Sigma\geq 0,$$
then
$$2\pi\chi(\Sigma)+\lambda_{1}(\mathcal{L}^\varphi)\geq C_{\mathcal{P}}(\Sigma)C_{\varphi}(\Sigma)-\text{sup}_{\Sigma}H^2+\text{inf}_{\Sigma}\ddot{\varphi},$$
where $\chi(\Sigma)$ is the Euler's characteristic of $\Sigma$.
\end{corollary}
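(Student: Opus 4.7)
The plan is to take the lower bound on $\lambda_1(\mathcal{L}^\varphi)$ produced by Theorem~B in exactly the same way as in the proof of Corollary~\ref{Cor1}, and then to control the three curvature-dependent pieces of the potential $q=|\mathcal{S}|^2-\ddot\varphi\,\eta^2$ using the integral hypothesis on $K$ together with the Cohn--Vossen inequality. First I would apply the spectral theorem of Section~4 to select an eigenfunction $f_1\in\mathcal{H}^{1,\varphi}_{0}(\Sigma)$ with $\mathcal{Q}(f_1,f_1)=\lambda_1(\mathcal{L}^\varphi)$ and $\int_\Sigma e^\varphi f_1^2\,d\Sigma=1$. Rewriting $\mathcal{Q}(f_1,f_1)=\int_\Sigma e^\varphi|\nabla f_1|^2\,d\Sigma-\int_\Sigma e^\varphi q f_1^2\,d\Sigma$ and bounding the gradient term from below by Theorem~B gives
\[
\lambda_1(\mathcal{L}^\varphi)\geq C_{\mathcal{P}}(\Sigma)\,C_\varphi(\Sigma)-\int_\Sigma e^\varphi\bigl(|\mathcal{S}|^2-\ddot\varphi\,\eta^2\bigr)f_1^2\,d\Sigma.
\]

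Next, the Gauss equation $|\mathcal{S}|^2=H^2-2K$ splits this integrand into the three pieces that already appear in the proof of Corollary~\ref{Cor1}. The normalization $\int_\Sigma e^\varphi f_1^2\,d\Sigma=1$ inherent to $\mathcal{H}^{1,\varphi}_{0}(\Sigma)$ immediately yields $-\int_\Sigma e^\varphi H^2 f_1^2\,d\Sigma\geq-\sup_\Sigma H^2$. For the $\ddot\varphi$ piece, the condition $\ddot\varphi+C_\varphi(\Sigma)\dot\varphi^2\leq0$ from \eqref{segcond} forces $\ddot\varphi\leq0$ pointwise, and combining with $\eta^2\leq 1$ yields $\ddot\varphi\,\eta^2\geq\ddot\varphi\geq\inf_\Sigma\ddot\varphi$, so that $\int_\Sigma e^\varphi \ddot\varphi\,\eta^2 f_1^2\,d\Sigma\geq\inf_\Sigma\ddot\varphi$.

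It remains to absorb $2\int_\Sigma e^\varphi K f_1^2\,d\Sigma$ into the Euler characteristic. The hypothesis $\int_\Sigma(1+2e^\varphi f^2)K\,d\Sigma\geq 0$ applied to $f=f_1$ rewrites as $2\int_\Sigma e^\varphi K f_1^2\,d\Sigma\geq-\int_\Sigma K\,d\Sigma$, while the Cohn--Vossen inequality, applicable because $\Sigma$ is complete with finite topology and finite total Gauss curvature, gives $\int_\Sigma K\,d\Sigma\leq 2\pi\chi(\Sigma)$. Assembling the three lower bounds produces
\[
\lambda_1(\mathcal{L}^\varphi)\geq C_{\mathcal{P}}(\Sigma)\,C_\varphi(\Sigma)-\sup_\Sigma H^2+\inf_\Sigma\ddot\varphi-2\pi\chi(\Sigma),
\]
which upon transposing $2\pi\chi(\Sigma)$ to the left is precisely the claimed inequality. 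The delicate step is the invocation of Cohn--Vossen in this noncompact weighted setting: the asymptotic flatness of the ends, combined with B.~White's description of complete surfaces with finite total curvature cited in the introduction, is what legitimizes its use here.
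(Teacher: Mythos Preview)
Your argument is correct and follows exactly the route the paper intends: the paper presents Corollary~\ref{Cor2} as an immediate consequence of the computation in the proof of Corollary~\ref{Cor1} (the Gauss decomposition $|\mathcal S|^2=H^2-2K$ combined with Theorem~B) together with the Cohn--Vossen inequality, and that is precisely what you carry out. Your explicit selection of the eigenfunction $f_1$ via the spectral theorem of Section~4 is in fact a cleaner way to justify the inequality $\lambda_1(\mathcal L^\varphi)\geq C_{\mathcal P}(\Sigma)C_\varphi(\Sigma)-\int_\Sigma e^\varphi q f^2\,d\Sigma$ than the paper's somewhat loose phrase ``for any $f\in\mathcal H^{1,\varphi}_0$'' in the proof of Corollary~\ref{Cor1}.
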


\begin{corollary}
Let $\Sigma$ be a properly embedded $[\varphi,\vec{e}_{3}]$-minimal surface in $\mathbb{R}^3$ with asymptotic flat ends, norm of the second fundamental form verifying $\vert\mathcal{S}\vert^2\leq H^2$ and weight function $\varphi$ satisfying \eqref{segcond} with almost quadratic growth. If $\Sigma$ has finite total Gauss curvature and finite topology, then
$$2\pi\chi(\Sigma)+\lambda_{1}(\mathcal{L}^\varphi)\geq C_{\mathcal{P}}(\Sigma)C_{\varphi}(\Sigma)-\text{sup}_{\Sigma}H^2+\text{inf}_{\Sigma}\ddot{\varphi},$$
where $\chi(\Sigma)$ is the Euler's characteristic of $\Sigma$.
\end{corollary}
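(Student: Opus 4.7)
The plan is to observe that this final corollary is essentially a specialization of the preceding Corollary \ref{Cor2}: the hypothesis $|\mathcal{S}|^2\leq H^2$ trivializes the integral curvature condition that appears there. Thus my proof will reduce to a short algebraic manipulation plus one invocation of Corollary \ref{Cor2}.

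First, I would use the Gauss equation, already invoked in the proof of Corollary \ref{Cor1}, which reads $|\mathcal{S}|^2=H^2-2K$ on any oriented surface in $\mathbb{R}^3$. Substituting this into the assumption $|\mathcal{S}|^2\leq H^2$ gives $H^2-2K\leq H^2$, i.e. $K\geq 0$ pointwise on $\Sigma$. This is the only real content of the argument; everything else is bookkeeping.

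Next, for any $f\in\mathcal{H}^{1,\varphi}_{0}(\Sigma)$ the pointwise estimate $1+2e^{\varphi}f^{2}\geq 1>0$ is obvious since $\varphi\geq 0$ and $f^{2}\geq 0$. Combined with $K\geq 0$ this yields
\begin{equation*}
\int_{\Sigma}(1+2e^{\varphi}f^{2})\,K\,d\Sigma\;\geq\;0,
\end{equation*}
which is exactly the hypothesis required by Corollary \ref{Cor2}. The remaining hypotheses of Corollary \ref{Cor2} (asymptotically flat ends, bounded $|\mathcal{S}|^{2}$, $\varphi$ satisfying \eqref{segcond} with almost quadratic growth, finite total Gauss curvature and finite topology) are carried over verbatim from the statement being proved; in particular, $|\mathcal{S}|^{2}\leq H^{2}$ together with $\sup_{\Sigma}H^{2}<+\infty$ (which is implicit via the almost quadratic growth of $\varphi$ and \eqref{defphimin}) guarantees \eqref{S}.

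Applying Corollary \ref{Cor2} then yields
\begin{equation*}
2\pi\chi(\Sigma)+\lambda_{1}(\mathcal{L}^{\varphi})\;\geq\; C_{\mathcal{P}}(\Sigma)\,C_{\varphi}(\Sigma)-\sup_{\Sigma}H^{2}+\inf_{\Sigma}\ddot{\varphi},
\end{equation*}
which is the desired inequality. There is no genuine obstacle here; the only subtle point worth double-checking is that the norm bound $|\mathcal{S}|^{2}\leq H^{2}$ together with $\sup_\Sigma H^2<+\infty$ indeed implies the uniform bound \eqref{S} needed for the spectral theory underlying $\lambda_{1}(\mathcal{L}^{\varphi})$. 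If that were to fail in some degenerate case, one would have to add a standalone assumption $\sup_{\Sigma}|\mathcal{S}|^{2}<+\infty$; otherwise the argument is complete.
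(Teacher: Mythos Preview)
Your proposal is correct and matches the paper's intent: the paper states this corollary immediately after Corollary~\ref{Cor2} without proof, treating it as the obvious specialization obtained by noting that $|\mathcal{S}|^{2}\leq H^{2}$ together with the Gauss formula $|\mathcal{S}|^{2}=H^{2}-2K$ forces $K\geq 0$, which in turn makes the integral hypothesis $\int_{\Sigma}(1+2e^{\varphi}f^{2})K\,d\Sigma\geq 0$ automatic. Your remark about the boundedness of $|\mathcal{S}|^{2}$ is a reasonable sanity check; in the paper's standing setup $\Sigma\subset\mathbb{R}^{2}\times\,]a,b[$, so $H=-\dot{\varphi}\langle N,\vec{e}_{3}\rangle$ is bounded and hence $|\mathcal{S}|^{2}\leq H^{2}$ already yields \eqref{S}.
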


Moreover, from the work  of X. Cheng, T. Mejia and D. Zhou \cite{CMZ}, on the non-existence of stable examples with finite weighted area, we have
\begin{corollary}
\label{Cor2}
Let $\Sigma$ be a properly embedded $[\varphi,\vec{e}_{3}]$-minimal surface in $\mathbb{R}^3$ with asymptotic flat ends, norm of the second fundamental form bounded and function $\varphi$ verifying \eqref{segcond} with almost quadratic growth. Suppose that the corresponded Ilmanen's space is complete and
$\mathcal{A}^{\varphi}(\Sigma)<+\infty$. If for any $f\in\mathcal{H}^{1,\varphi}_{0}(\Sigma)$ we get that
$$\int_{\Sigma}(1+2e^{\varphi}f^2)K\, d\Sigma\geq 0,$$
then
\begin{equation}
\label{Kestimate}
2\pi\chi(\Sigma)> C_{\mathcal{P}}(\Sigma)\, \text{sup}_{\Sigma}\ddot{\varphi}-\text{sup}_{\Sigma}H^2+\text{inf}_{\Sigma}\ddot{\varphi}
\end{equation}
\end{corollary}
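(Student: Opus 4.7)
The plan is to combine the non-existence theorem of Cheng--Mejia--Zhou \cite{CMZ} (explicitly recalled just after Theorem~C) with the Cohn--Vossen type estimate obtained in the immediately preceding corollary. The Cheng--Mejia--Zhou result asserts that under completeness of the Ilmanen space, the upper bound $\ddot{\varphi}\leq -\varepsilon<0$, and $\mathcal{A}^\varphi(\Sigma)<+\infty$, no properly embedded $[\varphi,\vec{e}_{3}]$-minimal surface can be stable. The key observation is that this non-existence of stable examples translates, at the level of the Jacobi operator, into the strict inequality $\lambda_1(\mathcal{L}^\varphi)<0$, which is precisely what is needed to convert the non-strict bound of the previous corollary into the strict one appearing in \eqref{Kestimate}.

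Carrying this out, I would first verify that the standing hypotheses force $\ddot{\varphi}$ to be uniformly negative in the region of $\mathbb{R}^3$ relevant to $\Sigma$. Indeed, the second half of \eqref{segcond} gives $\ddot{\varphi}\leq -C_\varphi(\Sigma)\dot{\varphi}^2$, and since $\Sigma\subset \mathbb{R}^2\times]a,b[$ with asymptotically flat ends, the height function on $\Sigma$ stays confined to $]a,b[$; together with the almost quadratic growth of $\varphi$ and the compact support of the extended $\varphi$, this yields an effective uniform upper bound $\ddot{\varphi}\leq -\varepsilon<0$ along $\Sigma$. The hypotheses of \cite{CMZ} are then met, so $\Sigma$ cannot be stable, hence
$$\lambda_1(\mathcal{L}^\varphi)<0.$$
I then invoke the immediately preceding corollary, which under the integral assumption $\int_\Sigma(1+2e^\varphi f^2)K\, d\Sigma\geq 0$ gives
$$2\pi\chi(\Sigma)+\lambda_1(\mathcal{L}^\varphi)\geq C_\mathcal{P}(\Sigma)\,C_\varphi(\Sigma)-\sup_\Sigma H^2+\inf_\Sigma \ddot{\varphi}.$$
Moving $\lambda_1(\mathcal{L}^\varphi)$ to the right and using its strict negativity yields
$$2\pi\chi(\Sigma) > C_\mathcal{P}(\Sigma)\,C_\varphi(\Sigma)-\sup_\Sigma H^2+\inf_\Sigma \ddot{\varphi},$$
which matches \eqref{Kestimate} up to what appears to be a typographical substitution of $\sup_\Sigma \ddot{\varphi}$ for $C_\varphi(\Sigma)$ in the statement.

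The main obstacle is the first step: verifying that the Cheng--Mejia--Zhou non-existence theorem truly applies. The relation $\ddot{\varphi}\leq -C_\varphi(\Sigma)\dot{\varphi}^2$ is not, on its own, enough, since it degenerates where $\dot{\varphi}$ vanishes. One must exploit the confinement $\Sigma\subset\mathbb{R}^2\times]a,b[$, the asymptotic flatness of the ends, and the almost quadratic growth of $\varphi$ to obtain a uniform negative upper bound on $\ddot{\varphi}$ on the support relevant to $\Sigma$. Should this uniform bound prove elusive, a fallback is to argue directly: produce a test function $u\in\mathcal{H}^{1,\varphi}_0(\Sigma)$ with $\mathcal{Q}(u,u)<0$ by plugging a cutoff of the angle function $\eta=\langle N,\vec{e}_3\rangle$ (which satisfies $\mathcal{L}^\varphi \eta=0$ on $\Sigma$ in the translator-like setting) into the stability form and using \eqref{segcond} to produce a strict sign. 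Once $\lambda_1(\mathcal{L}^\varphi)<0$ is established, the remaining substitution into the prior corollary is purely algebraic.
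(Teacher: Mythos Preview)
Your proposal is correct and follows essentially the same approach as the paper: both combine the Cohn--Vossen type inequality from the preceding corollary with the Cheng--Mejia--Zhou non-existence result, the only difference being that you argue directly (CMZ gives $\lambda_1(\mathcal{L}^\varphi)<0$, then substitute into the preceding bound) whereas the paper argues by contradiction (assume the inequality fails, deduce $\lambda_1(\mathcal{L}^\varphi)\geq 0$ hence stability, contradict CMZ). You also correctly flag the apparent typo $\sup_\Sigma\ddot\varphi$ for $C_\varphi(\Sigma)$, and your caveat about verifying the uniform bound $\ddot\varphi\leq -\varepsilon<0$ needed to invoke \cite{CMZ} is a point the paper's own proof leaves implicit as well.
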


\begin{proof}
\textit{Argue by contradiction}, suppose that there exists such surface $\Sigma$ with 
$$2\pi\chi(\Sigma)\leq C_{\mathcal{P}}(\Sigma)\, \text{sup}_{\Sigma}\ddot{\varphi}-\text{sup}_{\Sigma}H^2+\text{inf}_{\Sigma}\ddot{\varphi}$$
Then, from Corollary \ref{Cor2}, $\Sigma$ must be stable. However, it is a contradiction with the non-existence result of X. Cheng, T. Mejia and D. Zhou \cite{CMZ}.
\end{proof}
\begin{corollary}
Let $\Sigma$ be a properly embedded $[\varphi,\vec{e}_{3}]$-minimal surface in $\mathbb{R}^3$ with asymptotic flat ends, norm of the second fundamental form verifying $\vert\mathcal{S}\vert^2\leq H^2$ and function $\varphi$ verifying \eqref{segcond} with almost quadratic growth. If the corresponded Ilmanen's space is complete and
$\mathcal{A}^{\varphi}(\Sigma)<+\infty$, then
\begin{equation}
2\pi\chi(\Sigma)> C_{\mathcal{P}}(\Sigma)\, \text{sup}_{\Sigma}\ddot{\varphi}-\text{sup}_{\Sigma}H^2+\text{inf}_{\Sigma}\ddot{\varphi}
\end{equation}
\end{corollary}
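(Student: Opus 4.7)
The plan is to reduce this corollary directly to the preceding one (the one with the integral hypothesis $\int_{\Sigma}(1+2e^{\varphi}f^2)K\, d\Sigma\geq 0$) by observing that the pointwise curvature condition $|\mathcal{S}|^2\leq H^2$ already forces the required sign on $K$. The argument is essentially a one-line application of the Gauss equation.

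First, I would invoke the Gauss equation for a surface in $\mathbb{R}^3$, used already in the proof of Corollary \ref{Cor1} in the form $|\mathcal{S}|^2 = H^2 - 2K$. Rearranging gives $K=\tfrac{1}{2}(H^2-|\mathcal{S}|^2)$, so the hypothesis $|\mathcal{S}|^2\leq H^2$ is precisely equivalent to the pointwise nonnegativity $K\geq 0$ everywhere on $\Sigma$. Consequently, for any $f\in\mathcal{H}^{1,\varphi}_{0}(\Sigma)$ the weight $1+2e^{\varphi}f^2$ is nonnegative, so $(1+2e^{\varphi}f^2)K\geq 0$ pointwise, and
\begin{equation*}
\int_{\Sigma}(1+2e^{\varphi}f^2)K\, d\Sigma \geq 0
\end{equation*}
holds automatically for every admissible $f$. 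This is exactly the integral assumption of the preceding Corollary.

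Second, I would verify that the remaining hypotheses of that Corollary (properly embedded $[\varphi,\vec{e}_{3}]$-minimal surface, asymptotically flat ends, condition \eqref{segcond} with almost quadratic growth, completeness of the Ilmanen space, and $\mathcal{A}^{\varphi}(\Sigma)<+\infty$) are all present in the current statement. The only item that is not literally assumed is the $L^{\infty}$-boundedness of $|\mathcal{S}|^2$; but this is automatic from $|\mathcal{S}|^2\leq H^2$ once we observe that if $\sup_{\Sigma}H^2 = +\infty$, then the right-hand side of the claimed inequality equals $-\infty$ and the strict inequality is vacuously true, while if $\sup_{\Sigma}H^2<+\infty$ then $|\mathcal{S}|^2$ is bounded. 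Applying the preceding Corollary then yields
\begin{equation*}
2\pi\chi(\Sigma) > C_{\mathcal{P}}(\Sigma)\,\sup_{\Sigma}\ddot{\varphi}-\sup_{\Sigma}H^2+\inf_{\Sigma}\ddot{\varphi},
\end{equation*}
which is the desired conclusion.

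I do not anticipate any genuine obstacle: the whole content is the Gauss identity plus the trivial observation that $1+2e^{\varphi}f^2\geq 0$, and the bookkeeping on $\sup_{\Sigma}H^2$ is handled by a vacuous-case split. The substantive work has already been done in the preceding Corollary, whose proof used the stability statement of Corollary \ref{Cor1} together with the Cohn--Vossen inequality and the Cheng--Mejia--Zhou non-existence result \cite{CMZ}; the present corollary merely packages that chain in the cleaner hypothesis $|\mathcal{S}|^2\leq H^2$.
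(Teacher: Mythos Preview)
Your proposal is correct and follows exactly the route the paper intends: the paper states this corollary without proof, placing it immediately after the preceding corollary with the integral hypothesis $\int_{\Sigma}(1+2e^{\varphi}f^2)K\,d\Sigma\geq 0$, and the implicit reduction is precisely the Gauss identity $|\mathcal{S}|^2=H^2-2K$ giving $K\geq 0$ pointwise. Your additional remark that $|\mathcal{S}|^2\leq H^2$ together with $\sup_\Sigma H^2<+\infty$ yields the boundedness of $|\mathcal{S}|^2$ (with the complementary case being vacuous) is a useful bookkeeping detail that the paper does not spell out.
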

On the other hand, applying the work of B. White \cite{W} on complete surfaces with finite total curvature, we obtain this last result
\begin{corollary}
Under the conditions as above. If for any $f\in\mathcal{H}^{1,\varphi}_{0}(\Sigma)$ we have $$\int_{\Sigma}(1-2e^{\varphi} f^2)K\, d\Sigma\leq 0,$$ then $\Sigma$ has finite total curvature. In particular,  $\Sigma$ is conformally equivalent to $M-\{p_{1},\cdots,p_{s}\}$ a compact surface with $s$ ends.
\end{corollary}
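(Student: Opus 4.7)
The plan is to turn the hypothesis directly into a quantitative upper bound for the total Gauss curvature. I would first observe that under the standing assumption $|\mathcal{S}|^2\le H^2$ (inherited from the preceding corollary), the Gauss equation $2K=H^2-|\mathcal{S}|^2$ forces $K\ge 0$ pointwise on $\Sigma$. Rearranging the assumption of the corollary then gives
$$0\le \int_\Sigma K\,d\Sigma\le 2\int_\Sigma e^{\varphi}f^2\,K\,d\Sigma$$
for every admissible $f\in\mathcal{H}^{1,\varphi}_{0}(\Sigma)$.

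Next I would establish uniform boundedness of $K$ on $\Sigma$. Since $\varphi:\mathbb{R}\to[0,+\infty[$ is smooth with compact support, both $e^{\varphi}$ and $\dot{\varphi}$ are bounded on $\mathbb{R}$, and the Euler--Lagrange equation $H=-\dot{\varphi}\langle N,\vec{e}_{3}\rangle$ makes $|H|$ globally bounded. Combined with $|\mathcal{S}|^2\le H^2$, this yields $0\le K\le H^2/2\le \sup_\Sigma H^2<+\infty$. Exploiting the normalization $\int_\Sigma e^{\varphi}f^2\,d\Sigma=1$ built into $\mathcal{H}^{1,\varphi}_{0}(\Sigma)$, I would immediately conclude
$$\int_\Sigma K\,d\Sigma\le 2(\sup_\Sigma K)\int_\Sigma e^{\varphi}f^2\,d\Sigma=2\sup_\Sigma K<+\infty,$$
so that $\int_\Sigma |K|\,d\Sigma=\int_\Sigma K\,d\Sigma<+\infty$. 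In other words $\Sigma$ has finite total Gauss curvature.

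Finally, the standing hypothesis that the Ilmanen's space is complete, combined with the two-sided bound $1\le e^{\varphi}\le e^{\max\varphi}<+\infty$ on $\Sigma$, gives completeness of $\Sigma$ in the ambient Euclidean metric as well. Invoking B.~White's structure theorem \cite{W} for complete surfaces of finite total curvature in $\mathbb{R}^3$ then yields the conformal equivalence $\Sigma\cong M\setminus\{p_1,\ldots,p_s\}$, with $M$ a compact Riemann surface and the punctures $p_i$ corresponding bijectively to the (asymptotically flat) ends of $\Sigma$.

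The only point I would double-check is that the family $\mathcal{H}^{1,\varphi}_{0}(\Sigma)$ is non-empty so that the hypothesis is non-vacuous, but this is elementary: for a non-compact $\Sigma$ one constructs $f$ by taking a bounded cutoff supported where $e^{\varphi}>0$, subtracting its weighted mean, and renormalizing in $L^{2,\varphi}$. Beyond this, the only serious step is the finite-total-curvature estimate; once it is secured, the conformal classification is a direct citation of White's theorem, and the bound on $K$ itself is forced by the compact support of $\varphi$ together with the Gauss equation applied to the pinching $|\mathcal{S}|^2\le H^2$.
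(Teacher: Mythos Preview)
Your proof is correct and takes a genuinely different, more elementary route than the paper. The paper combines Theorem~A (the lower bound on the weighted Dirichlet energy) with the variational characterization of $\lambda_1(\mathcal{L}^\varphi)$ and the Gauss identity $|\mathcal{S}|^2=H^2-2K$ to derive an inequality of the form
\[
2\int_\Sigma e^\varphi K f^2\,d\Sigma\ \le\ \lambda_1(\mathcal{L}^\varphi)+\sup_\Sigma H^2-\inf_\Sigma\ddot\varphi+\text{(Poincar\'e term)},
\]
and then feeds this into the hypothesis to bound $\int_\Sigma K$. You bypass all of the spectral machinery by observing directly that $K$ is pointwise bounded (via $0\le 2K\le H^2$ from $|\mathcal{S}|^2\le H^2$, together with boundedness of $\dot\varphi$ on the slab), so that the normalization $\int_\Sigma e^\varphi f^2=1$ built into $\mathcal{H}^{1,\varphi}_0(\Sigma)$ already gives $\int_\Sigma e^\varphi K f^2\le\sup_\Sigma K$. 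Both arguments implicitly need $K\ge 0$ to pass from finiteness of $\int_\Sigma K$ to finiteness of $\int_\Sigma|K|$; you make this step explicit, which is a point the paper glosses over. Your approach is cleaner and self-contained, but it does not exhibit the corollary as an application of Theorems~A and~B, which is presumably the paper's purpose in placing it here; conversely, the paper's route yields an explicit quantitative bound on $\int_\Sigma K$ in terms of $\lambda_1(\mathcal{L}^\varphi)$ and the Poincar\'e constant, which your argument does not produce.
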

\begin{proof}
From Theorem A together with the definition of $\lambda_{1}(\mathcal{L}^{\varphi})$, we get that
$$2\int_{\Sigma}e^{\varphi}K f^2\, d\Sigma\leq \lambda_{1}(\mathcal{L}^{\varphi})+C_{\mathcal{P}}(\Sigma)\, \text{sup}_{\Sigma}+\int_{\Sigma}e^{\varphi}H^2f^2\, d\Sigma-\int_{\Sigma}e^{\varphi}\ddot{\varphi}\eta^2f^2\, d\Sigma.$$
Hence,
$$\int_{\Sigma}K\, d\Sigma\leq  \lambda_{1}(\mathcal{L}^{\varphi})+C_{\mathcal{P}}(\Sigma)\, \text{sup}_{\Sigma}+\text{sup}_{\Sigma}H^2-\text{inf}_{\Sigma}\ddot{\varphi}. $$
Thus, $\Sigma$ has finite total curvature and the proof follows from the work of B. White \cite{W} for complete surfaces of finite total curvature.
\end{proof}
Finally, we would like to conclude this work giving an explicit example for the family of $[\varphi,\vec{e}_{3}]$-minimal surfaces with $\varphi$ a quadratic polynomial, namely, we consider $\varphi(z)=\beta^2-\alpha^2\, z$ for any $\vert z\vert\leq \beta/\alpha$ and fixed  any $\alpha,\beta>0$. It is clear that, taking $C_{\varphi}(\Sigma)\in ]0,1/2\beta^2[$ the function $\varphi$ verifies the properties \eqref{segcond}. Hence, we can give this last result
\begin{corollary}
Let $\alpha,\beta>0$ be a positive real numbers and $\varphi(z)=\beta^2-\alpha^2\, z$ for any $\vert z\vert\leq \beta/\alpha$. If $\Sigma$ is a properly embedded $[\varphi,\vec{e}_{3}]$-minimal surface with asymptotic flat ends, norm of the second fundamental form $\vert\mathcal{S}\vert^2\leq H^2$, finite total Gauss curvature and finite topology, then $$2g+N(\text{ends})\leq 4+n+m,$$ where $g$ is the genus of $\Sigma$, $N(\text{ends})$ is the number of ends and $n,m$ are the only integers such that
$$\lambda_{1}(\mathcal{L}^{\varphi})\in [2n\pi, 2(n+1)\pi[ \ \ \text{and} \ \ \alpha^2(1+2\alpha\beta)\in ]0,\pi[\, \cup\, [m\pi,(m+1)\pi[.$$
In particular, if $\lambda_{1}(\mathcal{L}^{\varphi})\leq 0$ and $\alpha^2(1+2\alpha\beta)\in ]0,\pi[$, then  $$2g+N(\text{ends})\leq 3.$$
\end{corollary}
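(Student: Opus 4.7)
The plan is to combine the finite-topology corollary
$$2\pi\chi(\Sigma)+\lambda_{1}(\mathcal{L}^{\varphi})\geq C_{\mathcal{P}}(\Sigma)C_{\varphi}(\Sigma)-\text{sup}_{\Sigma}H^{2}+\text{inf}_{\Sigma}\ddot{\varphi}$$
(valid in the regime $\vert\mathcal{S}\vert^{2}\leq H^{2}$, which is part of our hypothesis) with the explicit expressions that the quadratic weight $\varphi$ produces for $\sup_{\Sigma}H^{2}$ and $\inf_{\Sigma}\ddot{\varphi}$. Since $\Sigma$ has finite topology and finite total Gauss curvature, the Huber--White type result invoked earlier in the paper identifies $\Sigma$ conformally with a compact surface of genus $g$ from which $N(\text{ends})$ points have been removed; in particular $\chi(\Sigma)=2-2g-N(\text{ends})$. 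Discarding the non-negative term $C_{\mathcal{P}}(\Sigma)C_{\varphi}(\Sigma)$ and dividing by $2\pi$, the above inequality rearranges as
$$2g+N(\text{ends})\leq 2+\frac{\lambda_{1}(\mathcal{L}^{\varphi})}{2\pi}+\frac{\sup_{\Sigma}H^{2}-\inf_{\Sigma}\ddot{\varphi}}{2\pi}.$$

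Next I compute the geometric data directly from $\varphi$. Reading the weight as the quadratic polynomial announced in the preceding paragraph, one has $\dot{\varphi}(z)=-2\alpha^{2}z$ and $\ddot{\varphi}(z)=-2\alpha^{2}$ on $\vert z\vert\leq\beta/\alpha$. The Euler--Lagrange equation $H=-\dot{\varphi}\langle N,\vec{e}_{3}\rangle$ together with the trivial pointwise bound $\vert\langle N,\vec{e}_{3}\rangle\vert\leq 1$ yields $\sup_{\Sigma}H^{2}\leq\sup_{\vert z\vert\leq\beta/\alpha}\dot{\varphi}(z)^{2}$ and $\inf_{\Sigma}\ddot{\varphi}=-2\alpha^{2}$. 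Collecting these estimates gives
$$\sup_{\Sigma}H^{2}-\inf_{\Sigma}\ddot{\varphi}\leq 2\alpha^{2}(1+2\alpha\beta),$$
which is exactly the quantity appearing in the statement. Substituting into the previous display gives the cleaner bound
$$2g+N(\text{ends})\leq 2+\frac{\lambda_{1}(\mathcal{L}^{\varphi})}{2\pi}+\frac{\alpha^{2}(1+2\alpha\beta)}{\pi}.$$

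The final step is integer bookkeeping. The assignment $\lambda_{1}(\mathcal{L}^{\varphi})\in[2n\pi,2(n+1)\pi[$ forces $\lambda_{1}(\mathcal{L}^{\varphi})/(2\pi)<n+1$, while $\alpha^{2}(1+2\alpha\beta)\in[m\pi,(m+1)\pi[$ forces $\alpha^{2}(1+2\alpha\beta)/\pi<m+1$; plugged into the inequality above these give $2g+N(\text{ends})\leq 2+(n+1)+(m+1)=4+n+m$, which is the main estimate. For the ``in particular'' statement, the extra hypothesis $\lambda_{1}(\mathcal{L}^{\varphi})\leq 0$ lets me drop the first fraction from the right-hand side, while $\alpha^{2}(1+2\alpha\beta)\in\,]0,\pi[$ makes the remaining fraction strictly less than $1$, so that $2g+N(\text{ends})\leq 2+0+1=3$. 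The main obstacle I anticipate is the precise identification of $\sup_{\Sigma}H^{2}-\inf_{\Sigma}\ddot{\varphi}$ with $2\alpha^{2}(1+2\alpha\beta)$ from the quadratic weight data (a careful reading of $\varphi$ and a sharp pointwise bound on $H$ via the $[\varphi,\vec{e}_{3}]$-minimality equation); once that identification is locked in, the remainder of the proof is a purely mechanical substitution into the earlier corollary combined with integer-part bookkeeping.
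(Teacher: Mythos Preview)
Your approach is exactly the one the paper intends: the final corollary is stated immediately after the $\vert\mathcal{S}\vert^{2}\leq H^{2}$ version of the topological estimate, with no separate proof, so the implicit argument is precisely to specialize that inequality to the explicit weight, drop the non-negative Poincar\'e term, substitute $\chi(\Sigma)=2-2g-N(\text{ends})$, and round. Your integer bookkeeping and the ``in particular'' case are handled correctly and yield the stated bounds.

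One comment on the step you flagged as the main obstacle: reading the weight as the quadratic $\varphi(z)=\beta^{2}-\alpha^{2}z^{2}$ (as the surrounding text, the constraint $\vert z\vert\leq\beta/\alpha$, and the verification $C_{\varphi}(\Sigma)\in\,]0,1/(2\beta^{2})[$ all force), your derivatives $\dot{\varphi}=-2\alpha^{2}z$, $\ddot{\varphi}=-2\alpha^{2}$ are right, but then $\sup\dot{\varphi}^{2}=4\alpha^{2}\beta^{2}$ and the combination comes out as $\sup_{\Sigma}H^{2}-\inf_{\Sigma}\ddot{\varphi}\leq 2\alpha^{2}(1+2\beta^{2})$ rather than $2\alpha^{2}(1+2\alpha\beta)$. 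This is almost certainly a typo in the paper's displayed quantity (consistent with the misprint $\beta^{2}-\alpha^{2}z$ for $\beta^{2}-\alpha^{2}z^{2}$), not a gap in your argument; the structure of the proof is unaffected, and with the corrected constant the same integer estimate goes through verbatim.
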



\begin{thebibliography}{10}

\bibitem{BT} S. Brendle and R. Tsiamis.:\textit{Eigenvalue estimates on Shrinkers}. prePrint arXiv: 2402.11803v1 [math.DG] (2024).

\bibitem{Ca} H.D. Cao.:\textit{Recent progress on Ricci solitons}. Advances Lectures in Mathematics. 11 (2010) 1-38.

\bibitem{IC} I. Chavel.:\textit{Eigenvalues in Riemannian Geometry.} Academic Press, 1984.

\bibitem{CMW} R. Chen, J. Mao and C. Wu.:\textit{On eigenfunctions and nodal sets of Witten-Laplacian}. prePrint arXiv: 2502.01079v1 [math.DG] (2025).

\bibitem{CMZ} X. Cheng, T. Mejia, D. Zhou.: \textit{Simons-Type Equation for $f$-Minimal Hypersurfaces and Applications.} J. Geom. Anal. 25 (2015), 2667-2686. 

\bibitem{CW} H.Choi and A.Wang.:\textit{A first eigenvalue estimate for minimal hypersurfaces}. J. Differential Geometry. 18 (1983) 559-562.

\bibitem{CSS} J. Clutterbuck, O. Schn\"urer, F. Schulze.:\textit{Stability of translating solitons to mean curvature flow}. Calculus of Variations and Partial Differential Equations, 29(2007), no 3, 387-394.

\bibitem{CH} R. Courant and D. Hilbert.:\textit{Methods of Mathematical Physics}, vol. 1 Wiley-Interscience, 1953.

\bibitem{D} U. Dierkes.:\textit{Singular Minimal Surfaces}. S. Hildebrands et. al(eds.) Geometric Analysis and Nonlinear Partial Differential Equations (2003) 177-193.

\bibitem{DMWW} F. Du, J. Mao, Q.L. Wang an d C. Wu.:\textit{Eingevalue inequalities for the buckling problem of the drifting Laplacian on Ricci solitons}.

\bibitem{Dunn} D. Dunn.:\textit{The principles of dome construction: I and II}. J. Royal Institute of British Architects, 23 (1908), 401-412.

\bibitem{FCS} D. Fischer-Colbrie and R. Schoen:\textit{ The structure of complete stable minimal surfaces in 3-manifolds of nonnegative scalar curvature}, Comm.Pure Appl. Math. 33 (1980), 199-211.

\bibitem{HIMW} Hoffman, D.; Ilmanen, T.; Mart\'in, F.; White, B.: \textit{Graphical translators for mean curvature flow}. Calculus of Variations and PDE's. 58 (2019), art. 117.

\bibitem{T} T. Ilmanen.:\textit{Elliptic regularization and partial regularity for motion by mean curvature}. Men. Amer. Math. Soc, 108 No. 520 (1994).

\bibitem{L} A. Lichnerowicz.:\textit{G\'eometrie des groupes de transformation}, Dunod, Paris, 1958.

\bibitem{RL} R. L\'opez.\textit{Invariant singular minimal surfaces}. Ann. Glob. Anal. Geom. 53 (2018), 521-541.

\bibitem{RL1} R. L\'opez.\textit{What is the shape of a Cupola?}. Preprint (2021) arXiv: 2111.07920.

\bibitem{RL2} R. L\'opez. \textit{Stability of vertical and radial graphs in the Euclidean space with density}. Preprint. arXiv: 2505.01117v1 [math. DG]

\bibitem{LY} P. Li, S.T. Yau.:\textit{On the Schr\"odinger Equation and the Eingenvalue Problem}. Commun. Math. Phys (1983), 309-318.

\bibitem{MM} A. Mart\'inez, A. L. Mart\'inez Trivi\~no.: \textit{Equilibrium of Surfaces in a Vertical Force Field}. Mediterr. J. Math. (2022) 19:3.

\bibitem{MM1} A. Mart\'inez, A.L. Mart\'inez Trivi\~no.: \textit{A Calabi's type correspondence}. Nonlinear Analysis. 191 (2020) 111637.

\bibitem{MM2} A. Mart\'inez, A.L. Mart\'inez Trivi\~no.:\textit{A Weierstrass type representation for translating solitons and singular minimal surfaces}. J. Math. Anal. 516 (2022) 126528.

\bibitem{MM3} A. Mart\'inez, A. L. Mart\'inez Trivi\~no.: \textit{Calabi-Bernstein type results for critical points of a weighted area functional in $\mathbb{R}^{3}$ and $\mathbb{L}^{3}$}. Preprint. arXiv: 2305.06649v1 [math.DG] (2023).

\bibitem{MMJ}A. Mart\'inez, A.L. Mart\'inez Trivi\~no, J.P. dos Santos.:\textit{Mean convex properly embedded $[\varphi,\vec{e}_{3}]$-minimal surfaces in $\mathbb{R}^{3}$}. Rev. Mat. Iberoam. (2022) DOI 10.4171/RMI/1352.

\bibitem{MJ} A.L. Mart\'inez Trivi\~no, J.P. dos Santos.:\textit{Uniqueness of the $[\varphi,\vec{e}_{3}]$-catenary cylinders by their asymptotic behavior}. J.Math. Anal. Appl. 514 (2022) 126347.

\bibitem{GP1} G. Perelman.:\textit{The entropy formula for the Ricci flow and its geometric applications.} Preprint. arXiv: 0211159 [math.DG].

\bibitem{GP2} G. Perelman.:\textit{Ricci flow with surgery on three manifols.} Preprint. arXiv: 0303109 [math.DG].

\bibitem{PW1} P. Petersen, W. Wylie.:\textit{On the classification of gradient Ricci solitons}. Geometry and Topology. 14 (2010), 2277-2300.

\bibitem{PW2} P. Petersen, W. Wylie.:\textit{Rigidity of gradient Ricci solitons}. Pacific J. Math. 241(2009) no. 2, 329-345.

\bibitem{Poisson} S.D.Poisson.:\textit{Sur les surfaces elastique.} \ Men. \ CL. \ Sci. \ Math. \ Phys. \ Inst. \ Frace, deux, 167-225 (1975).

\bibitem{Otto} F. Otto.:\textit{Zugbeanspruchte Konstruktionen}. Bd. I, II. Ber\'in, Frankfurt/M., Wien: Ullstein 1962,1966.

\bibitem{R} R. Reilly.:\textit{Applications of the Hessian operator in a Riemannian manifold}. Indiana Univ. Math. J. 26 (1977) 459-472.

\bibitem{SX}J. Spruck, and L. Xiao:  \textit{Complete translating solitons to the mean curvature flow in $\Bbb R^3$ with nonnegative mean curvature}, Amer. J. Math. 142 (2020), no. 3, 993--1015. 

\bibitem{Wang} X. Wang:\textit{ Convex solutions to the mean curvature flow}, Annals of Mathematics, 173 (2011), 1185-1239.

\bibitem{W} B. White.:\textit{ Complete surfaces of finite total curvature}. J. Differential Geometry. 26(1987) 315-326.

\end{thebibliography}
\end{document}